\documentclass[12pt,leqno]{article}
\usepackage{amssymb,amsfonts,amsmath,amsthm,amscd,mathrsfs}
\setlength{\textwidth}{16cm}
\addtolength{\textheight}{5cm}
\addtolength{\topmargin}{-2cm}
\addtolength{\oddsidemargin}{-1.5cm}
\addtolength{\evensidemargin}{-1cm}
\hoffset2.5pt

\def\IR{{\mathbb R}}
\def\IN{{\mathbb N}}
\def\IQ{{\mathbb Q}}
\def\IT{{\mathbb T}}
\def\IZ{{\mathbb Z}}

\def\n{\noindent}
\def\dsl{\textstyle\sum\limits}

\def\dis{\displaystyle}
\def\o{\omega}
\def\fr{\mbox{\footnotesize $\dis\frac{1}{2}$}}

\def\ov{\overline}

\def\ve{\varepsilon}
\def\f{\footnotesize} 
\def\r{\rightarrow}
\def\point{{\mbox{\large $.$}}}
\def\wh{\widehat}
\def\wt{\widetilde}

\def\cA{{\cal A}}

\def\cC{{\cal C}}

\def\cT{{\cal T}}

\def\cI{{\cal I}}

\def\cF{{\cal F}}

\def\cG{{\cal G}}
\def\cO{{\cal O}}

\dimendef\dimen=0

\newtheorem{theorem}{Theorem}[section]
\newtheorem{lemma}[theorem]{Lemma}

\newtheorem{proposition}[theorem]{Proposition}
\newtheorem{remark}[theorem]{Remark}

\thispagestyle{empty}
\setcounter{page}{0}

\begin{document}
\noindent

~
\vspace{1cm}

\bigskip
\begin{center}
{\bf RANDOM WALKS ON DISCRETE CYLINDERS AND  \\RANDOM INTERLACEMENTS}
\end{center}

\begin{center}
Alain-Sol Sznitman\end{center}

\bigskip
\begin{abstract}
We explore some of the connections between the local picture left by the trace of simple random walk on a cylinder $(\IZ / N\IZ)^d \times \IZ$, $d \ge 2$, running for times of order $N^{2d}$ and the model of random interlacements recently introduced in  \cite{Szni07} . In particular we show that for large $N$ in the neighborhood of a point of the cylinder with vertical component of order $N^d$ the complement of the set of points visited by the walk up to times of order $N^{2d}$ is close in distribution to the law of the vacant set of  \cite{Szni07}  with a level which is determined by an independent Brownian local time. The limit behavior of the joint distribution of the local pictures in the neighborhood of finitely many points is also derived.
\end{abstract}

 \vspace{7.5cm}
\n
Departement Mathematik   \\
ETH Z\"urich\\
CH-8092 Z\"urich\\
Switzerland

\newpage

\thispagestyle{empty}
~

\newpage
\setcounter{page}{1}
\setcounter{section}{-1}
\section{Introduction}

The aim of this work is to explore some of the connections between the model of random interlacements recently introduced in \cite{Szni07} and the microscopic structure left by simple random walk on an infinite discrete cylinder with base modelled on a $d$-dimensional torus, $d \ge 2$, of large side-length $N$, when the walk runs for times of order $N^{2d}$. The choice of this specific time scale is motivated by recent works on the disconnection time of the cylinder, where this time scale is shown to govern the magnitude of the disconnection time, at least in principal order, cf.~\cite{DembSzni06}, \cite{DembSzni07}, \cite{Szni08a}. In \cite{Szni07} we introduced the so-called interlacement at level $u \ge 0$, which is the trace left on $\IZ^{d+1}$ by a cloud of paths constituting a Poisson point process on the space of doubly infinite trajectories modulo time-shift, tending to infinity at positive and negative infinite times. The parameter $u$ enters as a multiplicative factor of the intensity measure of this point process. The interlacement at a positive level $u$ is a translation invariant ergodic infinite connected random subset of $\IZ^{d+1}$. Its complement is the so-called vacant set at level $u$. In this work we show that for large $N$ in the neighborhood of a point $x$ of the cylinder with vertical component of order $N^d$, the complement of the trajectory of the walk running up to time $N^{2d}$ is close in distribution to the law of a vacant set with level determined by an independent Brownian local time. The limit behavior of the joint distribution of the local pictures in the neighborhood of finitely many points as above, with mutual distance tending to infinity, is also covered by our results.

\medskip
Before discussing these matters any further, we first present the model more precisely. For $N \ge 1$, $d \ge 2$, we consider the discrete cylinder
\begin{equation}\label{0.1}
E = \IT \times \IZ, \;\mbox{where} \;\IT = (\IZ / N \IZ)^d\,.
\end{equation}

\n
We denote with $P_x$, $x \in E$, resp.~$P$, the canonical law on $E^\IN$ of simple random walk on $E$ starting at $x$, resp. starting with the uniform distribution on $\IT \times \{0\}$, the collection of points at height zero on the cylinder. We write $E_x$ and $E$ for the corresponding expectations, $X_\point$ for the canonical process. Given $x$ in $E$ and $n \ge 0$, the vacant configuration left by the walk in the neighborhood of $x$ at time $n$ is the $\{0,1\}^{\IZ^{d+1}}$-valued random variable:
\begin{equation}\label{0.2}
\o_{x,n}(\cdot) = 1\{X_m \not= \pi_E(\cdot) + x, \;\mbox{for all} \;0 \le m \le n\}\,,
\end{equation}

\n
where $\pi_E$ stands for the canonical projection of $\IZ^{d+1}$ onto $E$. With (\ref{2.16}) of \cite{Szni07}, the law $\IQ_u$ on $\{0,1\}^{\IZ^{d+1}}$ of the indicator function of the vacant set at level $u \ge 0$ is characterized by the property:
\begin{equation}\label{0.3}
\mbox{$\IQ_u(\o(x) = 1$, for all $x \in K) = \exp\{ - u \,{\rm cap}(K)\}$, for all finite sets  $K \subseteq \IZ^{d+1}$}\,,
\end{equation}

\n
where $\o(x)$, $x \in \IZ^{d+1}$, stand for the canonical coordinates on $\{0,1\}^{\IZ^{d+1}}$, and cap$(K)$ for the capacity of $K$, see (\ref{1.7}) below.

\medskip
The local time of the vertical component $Z_\point$ of $X_\point$ is defined as:
\begin{equation}\label{0.4}
L^z_n = \dsl_{0 \le m < n} 1\{Z_m = z\}, \;\mbox{for} \;z\in \IZ, n \ge 0\,.
\end{equation}

\n
We write $W$ for the canonical Wiener measure and $L(v,t)$, $v \in \IR$, $t \ge 0$, for a jointly continuous version of the local time of the canonical Brownian motion. The main result of this work is the following
\begin{theorem}\label{theo0.1}
Consider $M \ge 1$ and for each $N \ge 1$, $x_1,\dots,x_M$ points in $E$ such that
\begin{align}
&\lim\limits_N \;\inf\limits_{1 \le i \not= j \le M} |x_i - x_j| = \infty\,, \label{0.5}
\\[2ex]
&\mbox{$\lim\limits_N \;\dis\frac{z_i}{N^d} = v_i \in \IR$, for $ 1 \le i \le M$, with $z_i$ the $\IZ$-component of $x_i$}\,, \label{0.6}
\end{align}

\n
as well as non-negative integer-valued random variables $T_N$ such that
\begin{equation}\label{0.7}
\mbox{$\dis\frac{T_N}{N^{2d}} \;\underset{N \r \infty}{\longrightarrow} \alpha \in \IR_+$, in $P$-probability}\,.
\end{equation}

\medskip\n
As $N$ tends to infinity the $(\{0,1\}^{\IZ^{d+1}})^M \times \IR^M_+$-valued random variables
\begin{equation}\label{0.8}
\Big(\o_{x_1,T_N},\dots,\o_{x_M,T_N}, \;\dis\frac{L^{z_1}_{T_N}}{N^d} , \dots , \dis\frac{L^{z_M}_{T_N}}{N^d}\Big), \;N \ge 1\,,
\end{equation}

\medskip\n
converge in distribution under  $P$ to the law of the random vector
\begin{equation}\label{0.9}
(\o_1,\dots,\o_M, U_1,\dots,U_M)\,,
\end{equation}

\n
where $(U_1,\dots, U_M)$ is distributed as $\big((d+1) \,L (v_1, \frac{\alpha}{d+1}\big), \dots , (d+1) \,L\big(v_M, \frac{\alpha}{d+1})\big)$, under $W$, and conditionally on $(U_1,\dots,U_M)$ the random variables $\o_i, 1 \le i \le M$, are independent with distribution  $\IQ_{U_i}$.
\end{theorem}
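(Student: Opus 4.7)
The limit decomposes naturally into two pieces: the marginal convergence of the rescaled local times $L^{z_i}_{T_N}/N^d$ to $(d+1)L(v_i,\alpha/(d+1))$, and, conditionally on those limits, the convergence of the vacant configurations to independent $\IQ_{U_i}$-samples. I would attack them in this order.

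For the first piece, observe that the vertical component $Z_\bullet$ of $X_\bullet$ is a lazy random walk on $\IZ$ which moves (and then does so to a uniform neighbour) with probability $1/(d+1)$ per step, hence has vertical variance $1/(d+1)$ per step. A standard invariance principle for random walk local times then gives
\[
\Big(\frac{Z_{[N^{2d}s]}}{N^d},\ \frac{L^{[vN^d]}_{[N^{2d}s]}}{N^d}\Big) \;\xrightarrow[N\to\infty]{}\;
\Big(B_{s/(d+1)},\ (d+1)L(v,s/(d+1))\Big),
\]
jointly in $v$ and $s$, where $B$ is standard Brownian motion under $W$ and $L$ its local time. Coupled with the hypothesis (\ref{0.7}) that $T_N/N^{2d}\to\alpha$ in $P$-probability, and with the asymptotics (\ref{0.6}) of the heights $z_i$, this yields convergence of $(L^{z_i}_{T_N}/N^d)_{1\le i\le M}$ to $(U_i)_{1\le i\le M}$ in $P$-probability together with $W$-distribution.

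For the second piece, which is the heart of the matter, I would use an excursion decomposition modelled on the construction of \cite{Szni07}. Fix a finite $K\subseteq\IZ^{d+1}$, and around each $x_i$ take a mesoscopic box $B^N_i\subseteq E$ of side-length $r_N$ with $r_N\to\infty$, $r_N/N\to 0$, and an enclosing annulus $\wt B^N_i$ of side-length $2r_N$, so that $B^N_i$ and $\wt B^N_i$ identify canonically with the corresponding $\IZ^{d+1}$-boxes. Let $N_i$ be the number of excursions of $X_\bullet$ from $\partial B^N_i$ to $\partial \wt B^N_i$ up to time $T_N$. The key facts to establish are:
\begin{itemize}
\item[(a)] \emph{Excursion count:} $N_i/\mathrm{cap}(B^N_i)\to U_i$ in $P$-probability, via the identity
$\sum_{z\in B^N_i}L^{(\cdot)}_{T_N}(z)\approx N_i\cdot E_{e_{B^N_i}}[H_{\partial\wt B^N_i}]$ and the fact that the Green-function mass of the horizontal slice at height $z_i$ against a mesoscopic box reduces to $N^{-d}\,\mathrm{cap}_{\IZ^{d+1}}(B^N_i)$ up to negligible error.
\item[(b)] \emph{Mixing of re-entries:} once an excursion leaves $\wt B^N_i$, the horizontal component equilibrates on $\IT$ and the vertical one diffuses, so the next entrance distribution to $\partial B^N_i$ is close in total variation to the normalised equilibrium measure $e_{B^N_i}/\mathrm{cap}(B^N_i)$ of $B^N_i$ seen as a subset of $\IZ^{d+1}$.
\item[(c)] \emph{Hitting $K$:} for an excursion starting from this equilibrium distribution, the probability of touching $x_i+K$ equals $\mathrm{cap}(K)/\mathrm{cap}(B^N_i)\,(1+o(1))$, as $B^N_i\uparrow\IZ^{d+1}$ with $K$ fixed.
\end{itemize}
Combining (a)--(c) via a Poisson/Le Cam approximation gives
\[
P\bigl(\omega_{x_i,T_N}\equiv 1\text{ on }K\,\big|\,U_1,\dots,U_M\bigr) \;\longrightarrow\; \exp\bigl(-U_i\,\mathrm{cap}(K)\bigr),
\]
which by the characterization (\ref{0.3}) identifies the conditional limit law of $\omega_{x_i,T_N}$ as $\IQ_{U_i}$.

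The conditional independence of the $\omega_{x_i,T_N}$ across $i$ follows from the separation assumption (\ref{0.5}): between an excursion at $B^N_i$ and the next at $B^N_j$ the walk must traverse a macroscopic distance on the cylinder, so both the horizontal and vertical positions are re-mixed, rendering the sequences of excursions at distinct $B^N_i$ asymptotically independent (conditionally on the $U_i$'s). This reduces the full multi-point statement to the single-point computation above and to the joint local-time convergence of the first paragraph.

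The main obstacle, as usual in this circle of results, is the quantitative decoupling step (b) together with the decoupling across distinct indices $i\ne j$: one must pick $r_N$ on a scale that simultaneously makes the intra-excursion behaviour of $X_\bullet$ indistinguishable from that of the $\IZ^{d+1}$-walk (so that capacities and equilibrium measures on $B^N_i$ can be read as $\IZ^{d+1}$-quantities), and makes the inter-excursion mixing on $\IT$ and the vertical diffusion strong enough to equilibrate before re-entry. This combination of local comparison with $\IZ^{d+1}$ and global mixing on $\IT\times\IZ$, controlled by a single mesoscopic scale, is the technical crux; the strategy is essentially the one that underlies the disconnection-time analysis of \cite{DembSzni06}, \cite{DembSzni07}, \cite{Szni08a} and the construction of random interlacements in \cite{Szni07}.
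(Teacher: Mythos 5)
Your architecture (mesoscopic boxes $B_i^N$ around each $x_i$, excursion counts normalized by capacity, per-excursion hitting probability ${\rm cap}(K_i)/{\rm cap}(B_i^N)$, Poisson approximation) is a legitimate alternative blueprint, close in spirit to the torus case of \cite{Wind08}; it is genuinely different from the paper, which uses full cylinder slabs $B_\ell=\IT\times I_\ell$ rather than boxes around the points. But the two steps you yourself flag as the crux contain real gaps as stated. Step (b) is false at the scales you propose: with two concentric boxes of side $r_N$ and $2r_N$, an excursion leaving $\wt B_i^N$ returns to $B_i^N$ with probability bounded away from $0$ and $1$, and conditionally on returning it typically does so in time of order $r_N^2\ll N^2$, so neither the $\IT$- nor the $\IZ$-component has equilibrated; the re-entrance law is only \emph{comparable} to the normalized equilibrium measure, not asymptotically equal to it in total variation. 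One must either interpose a third, much larger scale or replace the TV claim by two-sided $(1\pm\ve)$ bounds on the per-excursion hitting probability via Harnack-type estimates. The paper's choice of slabs with $h_N\ge N(\log N)^2$ is designed precisely to avoid this: the return from $\partial\cO$ to $\cC$ takes time at least $N^2(\log N)^2$ up to probability $O(N^{-4d})$ (Lemma \ref{lem3.1}), so the torus component genuinely mixes and the re-entrance law is within $cN^{-3d}$ of uniform on a torus slice, which is what makes the coupling of Proposition \ref{prop3.3} work.

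The second gap is your mechanism for conditional independence across $i\ne j$. Hypothesis (\ref{0.5}) only gives $\inf_{i\ne j}|x_i-x_j|\to\infty$, possibly arbitrarily slowly; two points with the same limit $v_i=v_j$ may be at distance $o(N)$, in which case the walk does \emph{not} re-mix (horizontally or vertically) between visits to their neighborhoods, and your boxes must anyway satisfy $r_N=o(\inf_{i\ne j}|x_i-x_j|)$ for the construction to make sense. The correct mechanism in that regime is not mixing but additivity of capacity for well-separated sets: the paper puts all such points in the same slab and shows ${\rm cap}_{\wt B_\ell}(C_\ell)\to\sum_{i\in\cI_\ell}{\rm cap}(K_i)$ using transience of the walk on $\IZ^{d+1}$ together with (\ref{0.5}) (see (\ref{4.42})); the product structure of the limit is then read off from a joint Laplace functional. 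Relatedly, "conditionally on $U_1,\dots,U_M$" is not a meaningful conditioning at finite $N$; it has to be replaced by conditioning on a finite-$N$ observable, which the paper does by expressing both the hitting events and the rescaled local times through the same coarse-grained counts $\sum_k 1\{Z_{R_k}\in I_\ell\}$ before passing to the limit.
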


In a loose sense the above theorem states that in the large $N$ limit, given the rescaled local times $L^{z_i}_{T_N} / N^d$, $1 \le i \le M$, the microscopic pictures $\o_{x_i,T_N}$, $1 \le i \le M$, of the complement of the trajectory up to time $T_N$ in the neighborhood of the points $x_i$ are approximately independent and respectively distributed as vacant sets at level $L^{z_i}_{T_N} / N^d$, and these levels have a joint law close to that of the random vector $\big((d+1)$  $L(v_1, \frac{\alpha}{d+1}), \dots, (d+1) \,L(v_M, \frac{\alpha}{d+1})\big)$.

\medskip
One can also derive a statement similar in spirit to Theorem \ref{theo0.1}, which shows that random interlacements at level $u$ describe the limiting microscopic picture left by simple random walk on $(\IZ / N \IZ)^{d+1}$, $d \ge 2$, started with the uniform distribution and run for time $[u N^{d+1}]$, cf.~\cite{Wind08}.

\medskip
Let us further mention that the investigation of the percolative properties of the vacant set of random interlacements was initiated in \cite{Szni07} because of its links with the study of the disconnection time of $E$ by simple random walk, when $N$ is large. It was shown in \cite{Szni07} that for any $d \ge 2$, there is a finite critical value $u_* \ge 0$, such that for $u > u_*$, almost surely all connected components of the vacant set at level $u$ are finite, whereas for $u < u_*$, almost surely there is an infinite connected component in the vacant set at level $u$. The critical value $u_*$ was shown in \cite{Szni07} to be positive, at least when $d \ge 6$, (we recall that $d+1$ here plays the role of $d$ in \cite{Szni07}), and this was recently extended to all $d \ge 2$ in \cite{SzniSido08}. It was also proved in \cite{Teix08} that for any $u$ the number of infinite connected components in the vacant set at level $u$ is almost surely zero or almost surely one.

\medskip
Under the light of Theorem \ref{theo0.1} one can at least in a heuristic way relate the study of the disconnection time of $E$ by simple random walk to the percolative properties of the vacant set of random interlacements at the appropriate levels represented by the last components of the random vectors in (\ref{0.8}), (\ref{0.9}). The rough flavor of the relation is that when disconnection takes place the corresponding local picture for the vacant set should become non-percolative. This heuristic was made precise in \cite{Szni08b} and used to prove the tightness of the disconnection time divided by $N^{2d}$ and find a candidate for the limiting distribution of this random variable in terms of $u_*$ and Brownian local times. Combined with the lower bound of \cite{DembSzni07}, the upper bound of \cite{Szni08b} shows that, at least when $d \ge 17$,  the disconnection time ``lives in scale $N^{2d}$''.

\medskip
We will now give some comments on the proof of Theorem \ref{theo0.1}. The dependence between the various local pictures $\o_{x_i,T_N}$, which persists in the large $N$ limit, creates a difficulty. The strategy employed in the proof is to retain a coarse-grained information on the vertical component of the walk. Given this information we evaluate the distribution of the vacant set left by the walk in the neighborhood of the points $x_i,1 \le i \le M$. To extract this coarse-grained information we introduce grids on $\IZ$, which depend on $N$ and the sequences $z_1,\dots,z_M$, cf.~(\ref{0.6}). These grids enable to define sequences of times corresponding to the successive returns $R_k, k \ge 1$, of the vertical component of the walk to intervals of length of order $d_N$ centered on the grid and departures $D_k, k \ge 1$, from bigger concentric intervals of length of order $h_N$, where
\begin{equation}\label{0.10}
N^d \gg h_N \gg d_N\,.
\end{equation}

 \n
Away from the points $z_1,\dots,z_M$ the grid and the configuration of intervals is periodic with period $a$ suitable multiple of $h_N$. In the neighborhood of the points $z_1,\dots,z_M$ the grid undergoe  s some perturbation. If $w_1 < w_2 < \dots < w_L$ stand for the distinct limiting values $v_i, 1\le i \le M$, in (\ref{0.6}), the construction is such that for large $N$ the blocks $B_\ell = \IT \times I_\ell$, $1 \le \ell \le L$, where $I_\ell$ are intervals centered on the grid with length of order $d_N$, partition the $x_1,\dots,x_M$ according to the different limiting values $w_1,\dots,w_L$ of their rescaled vertical components in (\ref{0.6}). The blocks $B_\ell$ are concentric to bigger blocks $\wt{B}_\ell = \IT \times \wt{I}_\ell$, where $\wt{I}_\ell$, $1 \le \ell \le L$, are pairwise disjoint intervals of length of order $h_N$ with the same mid-point as $I_\ell$. In  addition we assume $h_N \ge N(\log N)^2$, so that between the successive times $R_k,D_k, k \ge 1$, the $\IT$-component of the walk can ``homogenize''.  Let us point out that depending on the sequences $x_1,\dots,x_M$ under consideration the above constraints may force the sequences $h_N$ and $d_N$ to grow almost as fast as $N^d$. 

\medskip
The grids we consider here are different from the regular grids with constant spacing of order $N$ which are used in \cite{DembSzni06}, \cite{DembSzni07}. The role of these grids and of the associated times $R_k, D_k, k \ge 1$, is threefold. They enable to approximately measure the total time elapsed: indeed we are interested in the various local pictures produced at time $T_N$, and in essence one can safely replace $T_N$ with $D_{k_*}$, where $k_*$ is adequately calibrated, cf.~(\ref{2.12}) and below (\ref{4.1}).  They also give a way to construct an approximation to the quantities $L^{z_i}_{T_N} / N^d$, by keeping track of the number of visits of $X_{R_k}$ to the block $B_\ell$ containing $x_i$, for $1 \le i \le M$, see Proposition \ref{prop2.1}. Lastly for $x_i$ in $B_\ell$, they single out which excursions $X_{[R_k,D_k]}$ may affect the vacant set left by the walk in the neighborhood of $x_i$, namely those for which $X_{R_k}$ belongs to $B_\ell$.

\medskip
The proof of Theorem \ref{theo0.1} also uses coupling, see Proposition \ref{prop3.3}. This enables to replace all relevant excursions $X_{[R_k,D_k]}$ by a collection of excursions $\wt{X}_\point^k$, which conditionally on all $Z_{R_m}$, $Z_{D_m}$, $m \ge 1$, are independent and respectively distributed as the walk on the cylinder with a uniformly chosen starting point on $\IT \times \{Z_{R_k}\}$ and conditioned to exit the bigger block containing $\IT \times \{Z_{R_k}\}$ through $\IT \times \{Z_{D_k}\}$. We denote with $P_{Z_{R_k},Z_{D_k}}$ these conditional laws. One also has a good control when $Z_{R_k}$ belongs to $I_\ell$ of the  effect of an excursion under the above conditional law on the local picture near $x_i$ located in $B_\ell = \IT \times I_\ell$, see Lemma \ref{lem4.2}.

\medskip
We will now describe the organization of this article. Section 1 mainly introduces additional notation. 

\medskip
In Section 2 the principal objective is Proposition \ref{prop2.1}, which relates the excursions of the vertical component of the walk attached to the grids under consideration to a measure of the total time elapsed and to an approximation of the rescaled local time of the vertical component at the locations $z_i$, and times of order $N^{2d}$.

\medskip
Section 3 contains the coupling construction, and the main result appears in Proposition \ref{prop3.3}.

\medskip
In Section 4 we provide the proof of Theorem \ref{theo0.1}. An important ingredient is to control the influence on the local picture near $x_i$ in $B_\ell$ of an excursion under the conditional measures $P_{Z_{R_k}, Z_{D_k}}$, with $Z_{R_k}$ in $I_\ell$, see Lemma \ref{lem4.2}.

\medskip
We use the following convention concerning constants. We denote with $c$ or $c^\prime$ positive constants which solely depend on $d$ and $\alpha$, see (\ref{0.7}), with the exception of Section 2 where constants solely depend on the parameters $\gamma$ and $\rho$ of (\ref{2.7}), (\ref{2.11}). Numbered constants such as $c_0, c_1,\dots$ are fixed and refer to their first appearance in the text. Dependence of constants on additional parameters appears in the notation.

\section{Some notation}
\setcounter{equation}{0}

The main object of this section is to introduce additional notation for some of the objects that will be recurrently used in the sequel.

\medskip
We say that two sequences of numbers are limit equivalent if their difference converges to $0$. We write $\pi_{\IT}$ and $\pi_\IZ$ for the canonical projections of $E$ onto $\IT$ and $\IZ$, and denote with $(e_i)_{1 \le i \le d+1}$ the canonical basis of $\IR^{d+1}$. We let $|\cdot |$ and $| \cdot |_\infty$ stand for the Euclidean and $\ell^\infty$-distances on $\IZ^{d+1}$ or for the corresponding distances induced on $E$. The notation $B(x,r)$ stands for the closed $|\cdot |_\infty$-ball with radius $r \ge 0$ and center $x$ in $\IZ^{d+1}$ or $E$. For $A,B$ subsets of $E$ or $\IZ^{d+1}$ we write $A+B$ for the set of elements of the form $x + y$, with $x$ in $A$ and $y$ in $B$, and $d(A,B) = \inf\{ | x-y|_\infty$, $x \in A,y \in B\}$ for the mutual $\ell^\infty$-distance of $A$ and $B$; when $A = \{x\}$ is a singleton, we simply write $d(x,B)$. Given $U$ a subset of $\IZ^{d+1}$ or $E$, we denote with $|U|$ the cardinality of $U$, with $\partial U$ the boundary of $U$ and $\partial_{\rm int} \,U$ the interior boundary of $U$:
\begin{equation}\label{1.1}
\partial U = \{x \in U^c; \exists \,x^\prime \in U, |x - x^\prime| = 1\}, \;\partial_{\rm int} \,U = \{x \in U; \;\exists x^\prime \in U^c, \,|x - x^\prime | = 1\}\,.
\end{equation}

\n
We write $\nu$ for the uniform distribution on $\IT$ and $\nu_z$ for the uniform distribution on $\IT \times \{z\}$, the ``level $z$'' of the cylinder:
\begin{equation}\label{1.2}
\nu = \dis\frac{1}{N^d} \;\dsl_{y \in \IT} \,\delta_y, \quad \nu_z = \dis\frac{1}{N^d} \;\dsl_{x \in \IT \times \{z\}} \,\delta_x, \;\mbox{for $z \in \IZ$}\,.
\end{equation}

\n
The canonical shift on $E^\IN$ is denoted with $(\theta_n)_{n \ge 0}$ and the canonical filtration with $(\cF_n)_{n \ge 0}$. We write $X_\point$ for the canonical process; $Y_\point$ and $Z_\point$ are its respective components on $\IT$ and $\IZ$. Given a subset $U$ of $E$ we denote with $H_U, \wt{H}_U$ and $T_U$ the entrance time in $U$, the hitting time of $U$ and the exit time from $U$:
\begin{equation}\label{1.3}
\begin{split}
H_U & = \inf\{n \ge 0; \, X_n \in U\}, \;\;\wt{H}_U = \inf\{n \ge 1; \,X_n \in U\}\,,
\\[0.5ex]
T_U & = \inf\{n \ge 0; \,X_n \notin U\}\,.
\end{split}
\end{equation}

\n
In case of a singleton $U = \{x\}$, we write $H_x$ or $\wt{H}_x$ for simplicity. We denote with $\tau_k, k \ge 0$, the time of successive displacements of the vertical component $Z_\point$ of $X_\point$:
\begin{equation}\label{1.4}
\tau_0 = 0, \;\tau_1 = \inf\{ n \ge 0; \;Z_n \not= Z_0\}, \;\tau_{k+1} = \tau_1 \circ \theta_{\tau_k} + \tau_k, \;\mbox{for $k \ge 1$}\,.
\end{equation}
Under $P$ the time changed process
\begin{equation}\label{1.5}
\wh{Z}_k = Z_{\tau_k}, \; k \ge 0\,,
\end{equation}

\medskip\n
has the distribution of simple random walk on $\IZ$ starting at the origin.

\medskip
We denote with $P_x^{\IZ^{d+1}}$ the canonical law of simple random walk on $\IZ^{d+1}$ starting at $x$ and with $E_x^{\IZ^{d+1}}$ the corresponding expectation. With an abuse of notation we keep the same notation as in the case of the walk on $E$; in particular we still write $X_\point$ for the canonical process. Given $K$ a finite subset of $\IZ^{d+1}$ and $U \supseteq K$, the equilibrium measure and capacity of $K$ relative to $U$ are defined by
\begin{align}
e_{K,U}(x)  = & \;P_x^{\IZ^{d+1}} [\wt{H}_K > T_U], \;\mbox{for $x \in K$}, \label{1.6}
\\
& \; 0, \;\mbox{for} \; x \notin K\,, \nonumber
\\[2ex]
{\rm cap}_U(K) = & \dsl_{x \in K} \,e_{K,U}(x)  \quad (\le |K|) \,.\label{1.7}
\end{align}

\n
The Green function of the walk killed outside $U$ is
\begin{equation}\label{1.8}
g_U(x,x^\prime) = E_x^{\IZ^{d+1}} \Big[\dsl_{n \ge 0} 1\{X_n = x^\prime , \;n < T_U\}\Big], \; x,x^\prime \in \IZ^{d+1}\,.
\end{equation}

\n
When $U = \IZ^{d+1}$, we drop the subscript $U$ from the notation and simply refer to the corresponding objects as equilibrium measure of $K$, capacity of $K$, or Green function.

\medskip
In the case of the discrete cylinder $E$, when $U \subsetneq E$ is a strict subset of $E$, we define analogously as in (\ref{1.6}) - (\ref{1.8}) the corresponding objects with now $P_x$ and $E_x$ in place of $P_x^{\IZ^{d+1}}$ and $E^{\IZ^{d+1}}_x$.

\medskip
It will sometimes be convenient to consider the continuous time random walks $\ov{X}_\point$, $\ov{Y}_\point$, $\ov{Z}_\point$ on $E, \IT$, and $\IZ$ with respective jump rates $2(d+1)$, $2d$, and $2$. With an abuse of notation we denote with $P_x$, $P_y^\IT$, $P^\IZ_z$ the corresponding canonical laws starting at $x \in E$, $y \in \IT$, $z \in \IZ$. Otherwise we use notation such as $(\ov{\theta}_t)_{t \ge 0}$, $(\ov{\cF}_t)_{t \ge 0}$ or $\ov{H}_U$ to refer to natural continuous time objects. We will also write
\begin{equation}\label{1.9}
\mbox{$\ov{\sigma}_n, n \ge 0$, with $\ov{\sigma}_0 = 0$, for the $n$-th jump time of the canonical process.}
\end{equation}

\n
The continuous time processes are convenient because on the one hand the discrete skeleton
$X_{\ov{\sigma}_n}$, $n \ge 0$, of $\ov{X}_\point$ is distributed as the discrete time walk $X_\point$, and on the other hand for $x = (y,z) \in E$,
\begin{equation}\label{1.10}
\mbox{under $P^\IT_y \times P^\IZ_z, \,(\ov{Y}_{\hspace{-0.5ex}\point}, \ov{Z}_\point)$ has the canonical law $P_x$ governing $\ov{X}_\point$}.
\end{equation}

\n
Note however that the discrete time processes $Y_\point$ and $Z_\point$ are not distributed as the discrete skeletons of $\ov{Y}_{\hspace{-0.5ex}\point}$ and $\ov{Z}_\point$, since they need not jump at each integer time. The next simple lemma will be useful and applies for instance to entrance times or exit times of subsets of $E$ of the form $\IT \times I$ and their compositions under time shifts. We refer to (\ref{1.2}) for the notation.

\begin{lemma}\label{lem1.1} Assume that
\begin{equation}\label{1.11}
\mbox{$X_\tau$ under $P$ has same distribution as $(\ov{Y}_{\hspace{-0.5ex}\ov{\tau}}, \ov{Z}_{\ov{\tau}})$ under $P^\IT_\nu \otimes P_0^\IZ$},
\end{equation}

\n
where $\tau$ is a non-negative integer valued random variable, and $\ov{\tau}$ a non-negative $\sigma(\ov{Z}_\point$)-measurable random variable. Then,
\begin{equation}\label{1.12}
\mbox{$Y_\tau$ is $\nu$-distributed under $P$ and independent of $Z_\tau$}.
\end{equation}
\end{lemma}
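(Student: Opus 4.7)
The plan is to transfer the question to the continuous-time side (where by (\ref{1.10}) the two components decouple under a product measure) and exploit two facts: first, that $\bar\tau$ is measurable with respect to $\bar Z_{\cdot}$ alone, and second, that $\nu$ is invariant under the continuous-time walk $\bar Y_{\cdot}$ on $\IT$ (because it is both the initial law and the uniform measure on a vertex-transitive graph).

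Concretely, I would first observe that under $P^\IT_\nu \otimes P^\IZ_0$ the processes $\bar Y_{\cdot}$ and $\bar Z_{\cdot}$ are independent. Pick any bounded measurable $f:\IT\to\IR$ and any bounded $\sigma(\bar Z_{\cdot})$-measurable functional $G$. By Fubini,
\begin{equation*}
E^\IT_\nu \otimes E^\IZ_0\bigl[f(\bar Y_{\bar\tau})\,G\bigr]
=\int G(\omega_Z)\,\Bigl(\int f\bigl(\bar Y_{\bar\tau(\omega_Z)}(\omega_Y)\bigr)\,dP^\IT_\nu(\omega_Y)\Bigr)\,dP^\IZ_0(\omega_Z).
\end{equation*}
For each fixed $t\ge 0$ one has $\int f(\bar Y_t)\,dP^\IT_\nu = \int f\,d\nu$, because $\nu$ is stationary for $\bar Y_{\cdot}$. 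Applied with $t=\bar\tau(\omega_Z)$, the inner integral equals $\int f\,d\nu$ independently of $\omega_Z$, so
\begin{equation*}
E^\IT_\nu \otimes E^\IZ_0\bigl[f(\bar Y_{\bar\tau})\,G\bigr]
=\Bigl(\int f\,d\nu\Bigr)\,E^\IZ_0[G].
\end{equation*}

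Taking $G\equiv 1$ shows that $\bar Y_{\bar\tau}$ is $\nu$-distributed; taking $G = h(\bar Z_{\bar\tau})$ for an arbitrary bounded $h$ shows that $\bar Y_{\bar\tau}$ is independent of $\bar Z_{\bar\tau}$. Finally, hypothesis (\ref{1.11}) says $X_\tau = (Y_\tau,Z_\tau)$ has the same joint distribution under $P$ as $(\bar Y_{\bar\tau},\bar Z_{\bar\tau})$ under the product measure, so both conclusions transfer verbatim to $(Y_\tau,Z_\tau)$, yielding (\ref{1.12}).

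No real obstacle is anticipated: the argument is a one-line Fubini plus the stationarity of $\nu$. The only care needed is to handle $\bar\tau$ properly—treating it as a measurable function of $\omega_Z$ so that the conditional distribution of $\bar Y_{\bar\tau}$ given $\bar Z_{\cdot}$ is a mixture of the laws of $\bar Y_t$, each of which has marginal $\nu$.
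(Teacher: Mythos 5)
Your argument is correct and is essentially the paper's own proof: both condition on the $\ov{Z}$-path (Fubini), use that $\ov{\tau}$ depends only on that path together with the stationarity of $\nu$ for $\ov{Y}_{\hspace{-0.5ex}\point}$ to factor the expectation, and then transfer the conclusion to $(Y_\tau,Z_\tau)$ via (\ref{1.11}). No issues.
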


\n
\begin{proof}
Denote with $f,g$ functions on $\IT$ and $\IZ$, $g$ bounded. One has
\begin{equation}\label{1.13}
\begin{split}
E[f(Y_\tau)\,g(Z_\tau)] & = E^\IT_\nu \otimes E_0^{\IZ} [f(\ov{Y}_{\ov{\tau}}) \,g(\ov{Z}_{\ov{\tau}})] = E^{\IZ}_0 [g(\ov{Z}_{\ov{\tau}})\,\mu_{\ov{\tau}}(f)] 
\\[0.5ex]
& = E^\IZ_0 [g(\ov{Z}_{\ov{\tau}})] \,\dis\int fd \nu = E[g(Z_\tau)] \,\dis\int f d \nu\,,
\end{split}
\end{equation}

\medskip\n
where for $t \ge 0$, we have set $\mu_t(f) = E_\nu[f(\ov{Y}_t)] = \int f d \nu$, and used the fact that $\nu$ is the stationary distribution of $\ov{Y}_\point$. This proves our claim.
\end{proof}

\section{Some auxiliary results on excursions and local time}
\setcounter{equation}{0}

In this section we introduce partially inhomogeneous grids on $\IZ$ and attach to them successive return and departure times from certain intervals centered on the grids. These times single out excursions of the walk on $\IZ$ under consideration. Our main interest lies in the derivation of approximations in terms of these excursions of the total time elapsed, and of the rescaled local time at locations close to the grid. The main result appears in Proposition \ref{prop2.1}. Throughout this section constants will solely depend on the parameters $\gamma$ and $\rho$ of (\ref{2.7}), (\ref{2.11}) below.

\medskip
We begin with the description of the grids. We are given three sequences of non-negative integers, $(a_N)_{N \ge 1}$, $(h_N)_{N\ge 1}$, $(d_N)_{N \ge 1}$, such that
\begin{equation}\label{2.1}
\begin{array}{rl}
{\rm i)} & \lim\limits_N \,a_N = \lim\limits_N \,h_N = \infty\,,
\\[2ex]
{\rm ii)} &d_N = o(h_N), \;h_N = o(a_N)\,,
\end{array}
\end{equation}

\n
as well as $L$ sequences of points on $\IZ$, $z^*_\ell(N)$, $N \ge 1$, $1 \le \ell \le L$, with $L \ge 1$, some fixed integer. We assume that for large $N$,
\begin{equation}\label{2.2}
\inf\limits_{1 \le \ell \not= \ell^\prime \le L} \;|z^*_\ell(N) - z^*_{\ell^\prime}(N)| \ge 100 h_N\,.
\end{equation}

\medskip\n
We are specifically interested in the case where $a_N = N^d$, but this special choice plays no role for the results of this section. We will see in the next section how we choose the above objects in the context of Theorem \ref{theo0.1}. From now on we implicitly assume that $N$ is large enough so that
\begin{equation}\label{2.3}
20 (d_N + 1) < h_N, \;100 h_N < a_N \;\mbox{and (\ref{2.2}) holds}\,.
\end{equation}

\n
The $N$-th grid is then defined as the disjoint union
\begin{equation}\label{2.4}
\begin{array}{l}
\cG_N = \cG^*_N \cup \cG^0_N, \;\mbox{where} \;\cG^*_N = \{z^*_\ell(N), \;1 \le \ell \le L\} \;\mbox{and}
\\[1ex]
\cG^0_N = \{z \in 2 h_N \,\IZ; \,|z - z^*_\ell(N)| \ge 2 h_N, \;\mbox{for} \;1 \le \ell \le L\}\,.
\end{array}
\end{equation}

\n
For simplicity we will drop the subscript $N$ in what follows and write $z^*_\ell$ in place of $z^*_\ell(N)$. We then introduce the sets
\begin{equation}\label{2.5}
C = \cG + [-d_N, d_N] \subset O = \cG + (-h_N, h_N) \,.
\end{equation}

\n
With (\ref{2.3}), (\ref{2.4}) we see that
\begin{equation}\label{2.6}
\begin{array}{rl}
{\rm i)} & 2 h_N \le |z - z^\prime | < 4 h_N, \;\mbox{for $z,z^\prime$ neighbors in $\cG$},
\\[1ex]
{\rm ii)} & \mbox{the intervals $\wt{I}_z = z + (-h_N, h_N)$, $z \in \cG$, are pairwise disjoint},
\\[1ex]
{\rm iii)} & I_z = z + [-d_N, d_N] \subseteq \wt{I}_z, \; \mbox{for $z \in \cG$} \,.
\end{array}
\end{equation}

\medskip\n
We now turn to the descriptions of the walks on $\IZ$ we consider in this section. We introduce a number
\begin{equation}\label{2.7}
\gamma \in (0,1]\,,
\end{equation}

\medskip\n
and denote with $Q^\gamma_z$, $z \in \IZ$, the canonical law on $\IZ^\IN$ of the random walk on $\IZ$ which jumps to one of its two neighbors with probability $\frac{\gamma}{2}$ and stays at its present location with probability $1 - \gamma$. We write $E^\gamma_z$ for the corresponding expectation. Our main interest lies in the cases $\gamma = (d+1)^{-1}$ and $\gamma = 1$, respectively corresponding to the law of the vertical component of the walk on the discrete cylinder $E$ and to the law of simple random walk on $\IZ$, which governs the process $\widehat{Z}_\point$ of (\ref{1.5}). These specific choices play no special role for the results of this section. We denote with $Z_\point$ the canonical process, but otherwise keep the notation of the previous section concerning the canonical shift, filtration  and the stopping times in (\ref{1.3}). The local time $L^z_n$, $n \ge 0$, $z \in \IZ$, of the canonical process $Z_\point$ is defined as in (\ref{0.4}) and satisfies the additive functional property:
\begin{equation}\label{2.8}
L^z_{n+m} = L^z_n + L^z_m \circ \theta_n, \;\mbox{for}\; n,m \ge  0, \,z \in \IZ\,.
\end{equation}

\n
An important role is played by the systems of excursions corresponding to the successive returns to $C$ and departures from $O$ of the process $Z_\point$
\begin{equation}\label{2.9}
\begin{split}
R_1 & = H_C, \;D_1 = T_O \circ \theta_{R_1} + R_1, \;\mbox{and for $k \ge 1$},
\\[1ex]
R_{k+1} & = R_1 \circ \theta_{D_k} + D_k, \;D_{k+1} = D_1 \circ \theta_{D_k} + D_k\,,
\end{split}
\end{equation}

\medskip\n
so that $0 \le R_1 \le D_1 \le \dots \le R_k \le D_k \le \dots \le \infty$, and all inequalities except maybe the first one are almost surely strict under any $Q^\gamma_z$.

\medskip
It is convenient to consider the quantity
\begin{equation}\label{2.10}
\begin{split}
t_N & = E_0^\gamma [T_{(-h_N + d_N, h_N - d_N)}] + E^\gamma_{d_N} [T_{(-h_N, h_N)}]
\\[1ex]
& = \gamma^{-1} [(h_N - d_N)^2 + h^2_N - d^2_N]\,,
\end{split}
\end{equation}

\medskip\n
which coincides with $E^\gamma_z [D_1]$, for $z \in \partial O(= \cG + \{- h_N, h_N\})$ sufficiently far away from $\cG^*$. We also introduce a parameter 
\begin{equation}\label{2.11}
\rho > 0 \,,
\end{equation}
and define
\begin{equation}\label{2.12}
T = [\rho \,a^2_N], \;k_* = \sigma - [\sigma^{3/4}], \;k^* = \sigma + [\sigma^{3/4}], \;\mbox{where} \;\sigma = \Big[\rho \;\dis\frac{a_N^2}{t_N}\Big]\;,
\end{equation}

\n
(note that with (\ref{2.1}), (\ref{2.10}), $\lim_N \;\sigma = \infty$).

\medskip
The next proposition contains the main results of this section. It enables us to relate the system of excursions introduced in (\ref{2.9}) to the time elapsed or to the local time spent at a point of $C$. The fact that the sequences $h_N/a_N$ and $d_N/h_N$ converge arbitrarily slowly to zero, see (\ref{2.1}), introduces some difficulty in the proof. We recall that (\ref{2.3}) is implicitly assumed.
\begin{proposition}\label{prop2.1}
\begin{align}
&\lim\limits_N \;Q^\gamma_0 \,[D_{k_*} \le T \le D_{k^*}] = 1\,.\label{2.13}
\\[1ex]
&\lim\limits_N \;\sup\limits_{z \in C} \;E^\gamma_0\, [(|L^z_T - L^z_{D_{k_*}} | / a_N) \wedge 1  ] = 0\,.
\label{2.14}
\end{align}
Moreover one has
\begin{equation}\label{2.15}
\sup\limits_N \;\sup\limits_I \;\dis\frac{h_N}{a_N} \;E_0^\gamma \Big[\dsl_{1 \le k \le k_*} \;1\{Z_{R_k} \in I\}\Big] < \infty\,,
\end{equation}
and
\begin{equation}\label{2.16}
\lim\limits_N \;\sup\limits_I \;\sup\limits_{z \in I} \; E^\gamma_0  \Big[\Big| L^z_{D_{k_*}} - \dis\frac{h_N}{\gamma} \; \dsl_{1 \le k \le k_*} \;1\{Z_{R_k} \in I\} \Big| \Big] / a_N = 0\,,
\end{equation}

\medskip\n
where in {\rm (\ref{2.15})} and {\rm (\ref{2.16})} $I$ runs over the collection $u + [-d_N, d_N]$, $u$ in $\cG$, of components of $C$, see {\rm (\ref{2.5})}, and $N$ fulfills {\rm (\ref{2.3})}.
\end{proposition}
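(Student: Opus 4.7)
The guiding idea is to treat the skeleton $\wh Z_k := Z_{R_k}$ as an approximate simple random walk on $\cG$ with step size $\pm 2 h_N$, and to analyse each excursion $(R_k, D_k)$ through the one-dimensional Green function of $\wt I_u$, $u \in \cG$. For (\ref{2.13}), set $\xi_k = D_k - D_{k-1}$ (with $D_0 = 0$). By the strong Markov property at $D_{k-1}$ and (\ref{2.10}) one has $E^\gamma_0[\xi_k \mid \cF_{D_{k-1}}] = t_N$ whenever $Z_{D_{k-1}}$ lies in the bulk (at distance $\ge 2 h_N$ from $\cG^*$), and in any case the standard variance bound for exit times of an interval of length $\sim h_N$ gives $\mathrm{Var}(\xi_k \mid \cF_{D_{k-1}}) \le c\, t_N^2$. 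The martingale differences $M_k := \xi_k - E[\xi_k \mid \cF_{D_{k-1}}]$ are orthogonal, so $\mathrm{Var}(\dsl_{k \le \sigma} M_k) = O(\sigma t_N^2)$; meanwhile the mean shift $E[\xi_k \mid \cF_{D_{k-1}}] - t_N$ is non-zero only on perturbed configurations of $Z_{D_{k-1}}$ (within $\sim h_N$ of the $L$ points of $\cG^*$), is of size $O(t_N)$ there, and the expected number of such perturbed indices $k \le \sigma$ is $O(\sqrt{\sigma})$ by the local-limit estimate for the skeleton. Hence $|D_\sigma - \sigma t_N| = O_P(\sqrt{\sigma}\, t_N)$ by Chebyshev, and since $T = \sigma t_N + O(t_N)$ and $\sqrt{\sigma} \ll \sigma^{3/4}$, the same bound applied to $D_{k_*}$ and $D_{k^*}$ brackets $T$ between them with probability $1-o(1)$, which is (\ref{2.13}).

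For (\ref{2.15}), translation invariance in the bulk and the recurrence of one-dimensional random walk between neighbouring grid points give $Q^\gamma[\wh Z_{k+1} = \wh Z_k \pm 2 h_N \mid \cF_{D_k}] = \tfrac12$. Hence, up to $O(1)$ perturbed transitions near $\cG^*$, $(\wh Z_k/(2 h_N))_{k \ge 0}$ is a simple random walk on $\IZ$, and the classical bound $\dsl_{k \le k_*}\, Q[\wh Z_k = u] \le c\sqrt{k_*}$ (local limit theorem, or reflection principle) holds uniformly in $u \in \cG$. Combined with $\sqrt{k_*} \asymp \sqrt{\gamma \rho}\; a_N/h_N$ from (\ref{2.10})--(\ref{2.12}), this gives (\ref{2.15}).

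For (\ref{2.16}), the walk sits outside $O \supset I$ on each interval $[D_{k-1}, R_k]$, so $L^z_{D_{k_*}} = \dsl_{k \le k_*}(L^z_{D_k} - L^z_{R_k})$, and the strong Markov property at $R_k$ yields
\[
E\bigl[L^z_{D_k} - L^z_{R_k} \mid \cF_{R_k}\bigr] = \gamma^{-1} g_{\wt I_u}(Z_{R_k}, z)\, 1\{Z_{R_k} \in I\}.
\]
A direct computation of the Green function of the one-dimensional walk on $\wt I_u = u + (-h_N, h_N)$ with absorbing boundary yields $g_{\wt I_u}(u \pm d_N, z) = h_N + O(d_N)$ uniformly in $z \in I_u$. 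Together with (\ref{2.15}) and $d_N = o(h_N)$, this bounds the first-moment error by $O(d_N)\cdot O(a_N/h_N) = o(a_N)$. The conditional second moment $E[(L^z_{D_k} - L^z_{R_k})^2 \mid \cF_{R_k}] \le c\, h_N^2$ on $\{Z_{R_k} \in I_u\}$ (the local time at $z$ before exit of $\wt I_u$ is geometric with mean $\sim h_N$), combined with the conditional independence of excursions given the skeleton, bounds the $L^2$ fluctuation of $L^z_{D_{k_*}}$ around its conditional mean by $O(\sqrt{h_N^2\cdot a_N/h_N}) = O(\sqrt{a_N h_N}) = o(a_N)$, whence (\ref{2.16}).

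Finally, for (\ref{2.14}), on the event of (\ref{2.13}) one has $|L^z_T - L^z_{D_{k_*}}| \le L^z_{D_{k^*}} - L^z_{D_{k_*}}$, and repeating the first-moment computation above on the range $(k_*, k^*]$ together with $\dsl_{k_* < k \le k^*} Q[\wh Z_k = u] = O((k^*-k_*)/\sqrt{k_*}) = O(\sigma^{1/4})$ gives $E[L^z_{D_{k^*}} - L^z_{D_{k_*}}] = O(h_N\, \sigma^{1/4}) = O(\sqrt{a_N h_N}) = o(a_N)$, yielding (\ref{2.14}). The main technical difficulty throughout is that (\ref{2.1}) allows both $d_N/h_N$ and $h_N/a_N$ to decay arbitrarily slowly, so every estimate must be genuinely $o(1)$ under this sole assumption; in particular the exponent $3/4$ in the definition of $k_*, k^*$ is dictated by the need to exceed $1/2$ (to dominate the $O_P(\sqrt{\sigma}\,t_N)$ Chebyshev fluctuation of $D_\sigma - \sigma t_N$) yet fall below $1$ (so that the local-limit estimate $(k^*-k_*)/\sqrt{k_*} = O(\sigma^{1/4})$ still yields $o(a_N)$ after multiplication by $h_N$).
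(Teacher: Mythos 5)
Your overall architecture is sound and in places parallels the paper's: the bracketing of $T$ between $D_{k_*}$ and $D_{k^*}$ via concentration of $D_\sigma$ around $\sigma t_N$, the decomposition of $L^z_{D_{k_*}}$ into excursion contributions with a conditional-mean/second-moment (i.e.\ martingale) split for (\ref{2.16}), and the reduction of (\ref{2.14}) to the increment of local time over $(k_*,k^*]$ all match the structure of the paper's argument (which uses exponential Chebyshev where you use second moments, but that is a matter of taste here). However, there is one genuine gap, and it sits under every one of your four estimates: the claim that the expected number of indices $k \le k_*$ with $Z_{R_k} \in I$ (equivalently, with $Z_{D_k}$ within $O(h_N)$ of a given component) is $O(\sqrt{k_*}) = O(a_N/h_N)$, which you justify by treating $(\wh Z_k/(2h_N))$ as a simple random walk ``up to $O(1)$ perturbed transitions near $\cG^*$'' and invoking a classical local limit theorem. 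The number of perturbed transitions is not $O(1)$: the skeleton revisits the $O(h_N)$-neighbourhoods of $\cG^*$ an unbounded number of times — in expectation of order $a_N/h_N \to \infty$, which is precisely the quantity you are trying to bound, so the argument is circular. Moreover the skeleton is not a simple random walk even in the bulk (its one-step law has an $O(d_N/h_N)$ asymmetry and holding probability), and near $\cG^*$ the grid spacings vary between $2h_N$ and $4h_N$, so no off-the-shelf local limit theorem or reflection principle applies to this inhomogeneous chain.

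The paper's Lemma \ref{lem2.2}, estimate (\ref{2.41}), supplies exactly the missing bound and does so without ever analysing the skeleton chain: since $E^\gamma_z[D_1] \ge t_N$ for all $z \in \partial O$, each visit of $Z_{D_k}$ to the $5h_N$-neighbourhood of $I$ ``costs'' at least $t_N$ units of expected time spent within $10 h_N$ of $I$ before $D_{k+1}$; summing over $k < k_*$ and intersecting with $\{D_k < c_3 T\}$ (which holds up to a stretched-exponentially small error by (\ref{2.43})), the total count is at most $t_N^{-1} \sum_{z: d(z,I)\le 10h_N} E^\gamma_0[L^z_{[c_3 T]}] \le c\,t_N^{-1} h_N \sqrt{T} \le c\,a_N/h_N$, using only the heat-kernel bound $E^\gamma_{z'}[L^z_n] \le c\sqrt{n}$ for the original walk. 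You need this comparison (or some substitute that genuinely handles the inhomogeneity of the grid) to make (\ref{2.15}) rigorous; once it is in place, your treatment of (\ref{2.13}), (\ref{2.14}) and (\ref{2.16}) goes through essentially as you describe.
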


\begin{proof}
We begin with the proof of (\ref{2.13}), and first show that:
\begin{equation}\label{2.17}
\lim\limits_N \,Q_0^\gamma\, [T \le D_{k^*}] = 1\,.
\end{equation}

\n
Consider the periodic grid $\wt{\cG} = 2 h_N \IZ$, and the random variable $\wt{D}$ corresponding to $D_1$ in (\ref{2.9}) when one replaces $\cG$ with $\wt{\cG}$ in (\ref{2.5}). Coming back to (\ref{2.10}) we  see that
\begin{equation}\label{2.18}
E^\gamma_{h_N} [\wt{D}_0] =  t_N\,.
\end{equation}

\medskip\n
With the left-hand inequality of (\ref{2.6}) i) and a comparison argument, it follows that
\begin{equation}\label{2.19}
E^\gamma_z [e^{-\lambda D_1}] \le E^\gamma_{h_N} [e^{-\lambda \wt{D}}], \;\mbox{for} \;\lambda \ge 0, z \in \partial O = \cG + \{ - h_N, h_N\}\,.
\end{equation}

\medskip\n
Note that with (\ref{2.3}), (\ref{2.10}), $\sup_{z \in \IZ} \;E^\gamma_z [T_{(- 4 h_N, 4h_N)} ] \le c\,t_N$, so that with Khasminskii's lemma, cf.~\cite{Khas59}, and the strong Markov property, for a suitable positive constant $c_0$,
\begin{equation}\label{2.20}
E^\gamma_{h_N} \Big[ \exp\Big\{\dis\frac{c_0}{t_N} \;\wt{D}\Big\}\Big] \le 2, \;\sup\limits_{z \in \IZ} \;E^\gamma_z \Big[ \exp\Big\{\dis\frac{c_0}{t_N} \;D_1\Big\}\Big] \le 2\,.
\end{equation}

\n
A repeated use of the strong Markov property at times $D_{k^*-1}, \dots, D_1$ and (\ref{2.19}) yield that
\begin{equation}\label{2.21}
E^\gamma_0 [\exp\{ - \lambda D_{k^*}\}] \le E^\gamma_{h_N} [\exp\{ - \lambda \wt{D}\}]^{(k^* - 1)}\,,
\end{equation}
and therefore we find that
\begin{equation}\label{2.22}
Q^\gamma_0 [D_{k^*} \le T] \le \exp\{\lambda(T - (k^* - 1) \,t_N)\}\;E^\gamma_{h_N} [e^{-\lambda \wh{D}}]^{(k^*-1)}\,,
\end{equation}

\n
where $\wh{D} = \wt{D} - E_{h_N}^\gamma [ \wt{D}] \stackrel{(\ref{2.18})}{=} \wt{D} - t_N$. Moreover we also have:
\begin{equation}\label{2.23}
\begin{split}
E^\gamma_0 \Big[\exp\Big\{\dis\frac{u}{t_N} \;\wh{D}\Big\}\Big] & = 1 + \dis\frac{u}{t_N} \;E^\gamma_{h_N} [\wh{D}] + u^2 \;\dis\int^1_0 \,ds \;\dis\int^s_0 \,dt \;E_{h_N}^\gamma \Big[\Big( \dis\frac{\wh{D}}{t_N}\Big)^2 \,e^{t \,\frac{u}{t_N} \,\wh{D}}\Big]
\\[1ex]
& \le 1 + c_1 \,u^2, \;\mbox{if} \;|u| \le c_2\,,
\end{split}
\end{equation}

\n
where we used the left-hand inequality of (\ref{2.20}) to bound the last term of (\ref{2.23}) and the fact that the $Q^\gamma_{h_N}$-expectation of $\wh{D}$ vanishes. Choosing $\lambda = u/t_N$ in (\ref{2.22}) with $0 < u \le c_2$, we find that
\begin{equation}\label{2.24}
\begin{split}
Q^\gamma_0 [D_{k^*} \le T] &\le  \exp\Big\{ \dis\frac{u}{t_N} \;(T - (k^* -1 ) \,t_N)\Big\} (1 + c_1 \,u^2)^{k^*} 
\\ 
&\hspace{-1ex} \stackrel{(\ref{2.12})}{\le} \exp\{c - u [\sigma^{3/4}] + 2 c_1 \,u^2 \sigma\}\,.
\end{split}
\end{equation}

\medskip\n
Choosing $u = \frac{1}{4c_1} \;\sigma^{-1/4}$, we see that for large $N$
\begin{equation}\label{2.25}
Q^\gamma_0 [D_{k^*} \le T] \le c \,\exp\{-c \,\sigma^{1/2}\}\,,
\end{equation}

\n
which tends to zero, see below (\ref{2.12}), and this proves (\ref{2.17}). We will now prove that
\begin{equation}\label{2.26}
\lim\limits_N \;Q_0^\gamma[D_{k_*} \le T] = 1\,.
\end{equation}

\n
With (\ref{2.6}) i) and (\ref{2.9}) we see that when $z \in \partial O$ is at distance $d(z,\cG^*)$ at least $5 h_N$ from $\cG^*$, then
\begin{equation}\label{2.27}
\mbox{the distribution of $D_1$ under $Q^\gamma_z$ coincides with that of $\wt{D}$ under $Q^\gamma_{h_N}$.}
\end{equation}
We thus can write (see above (\ref{1.1}) for the notation):
\begin{equation}\label{2.28}
\begin{split}
D_{k_*} &= D_1 + \Sigma_1 + \Sigma_2, \;\mbox{where}
\\[1ex]
\Sigma_1& = \dsl_{1 \le k < k_*} D_1 \circ \theta_{D_k} 1\{d (Z_{D_k}, \cG^*) \ge 5 h_N\}\,, 
\\[1ex]
\Sigma_2 &= \dsl_{1 \le k < k_*} D_1 \circ \theta_{D_k} 1\{d(Z_{D_k},\cG^*) < 5 h_N\}\,,
\end{split}
\end{equation}

\n
and $\Sigma_2$ is the term where the inhomogeneity of the grid is mostly felt. With (\ref{2.27}) and the application of the  strong Markov property at times $D_{k_*-1},\dots,D_1$, we find that:
\begin{equation}\label{2.29}
E^\gamma_0 [\exp\{\lambda \Sigma_1\}] \le E^\gamma_{h_N} [\exp\{\lambda \wt{D}\}]^{k_*}, \;\mbox{for} \;\lambda \ge 0\,,
\end{equation}

\n
where of course both members may be infinite. With a similar argument as in (\ref{2.24}), we see that for $0 < u \le c_2$,
\begin{equation}\label{2.30}
Q^\gamma_0 [\Sigma_1 \ge T - t_N \,\sigma^{5/8}] \le \exp \Big\{- \dis\frac{u}{t_N} (T - t_N \,\sigma^{5/8} - k_* \,t_N) + c_1 \,u^2 \,k_*\Big\}
\end{equation}

\n
from which one deduces that for large $N$
\begin{equation}\label{2.31}
Q^\gamma_0 [\Sigma_1 \ge T-t_N \,\sigma^{5/8}] \le c \,\exp\{ - c \,\sigma^{1/2}\}
\end{equation}

\n
which tends to $0$ as $N$ tends to infinity.

\medskip
With the second inequality of (\ref{2.20}), it straightforwardly follows that
\begin{equation}\label{2.32}
\lim\limits_N \;Q^\gamma_0 \Big[D_1 \ge \fr \; t_N \;\sigma^{5/8}\Big] = 0\,.
\end{equation}

\n
The claim (\ref{2.26}) will thus follow once we show that
\begin{equation}\label{2.33}
\lim\limits_N \,Q^\gamma_0 \Big[\Sigma_2 \ge \fr \;t_N \,\sigma^{5/8}\Big] = 0\,.
\end{equation}
For this purpose we will use the following
\begin{lemma}\label{lem2.2}
\begin{equation}\label{2.40}
\sup\limits_{z,z^\prime \in \IZ} \;E^\gamma_{z^\prime} [L^z_n] \le c \,\sqrt{n}, \;\;\mbox{for $n \ge 0$},
\end{equation}

\medskip\n
and with the same notation as in {\rm (\ref{2.15})}, we also have:
\begin{equation}\label{2.41}
\sup\limits_N \;\sup\limits_I \;\dis\frac{h_N}{a_N} \;E^\gamma_0 \Big[\dsl_{1 \le k < k_*} 1\{d(Z_{D_k}, I) \le 5 h_N\}\Big] \le c < \infty\,.
\end{equation}
\end{lemma}

\begin{proof}
We begin with the proof of (\ref{2.40}). General bounds on heat kernels of random walks, see for instance Corollary 14.6 of \cite{Woes00} imply that
\begin{equation}\label{2.42}
\sup\limits_{z,z^\prime \in \IZ} \,Q^\gamma_{z^\prime} [Z_n = z] \le c\,n^{-1/2}, \;\mbox{for} \;n \ge 1\,.
\end{equation}

\n
It now follows that with (\ref{0.4})
\begin{equation*}
\begin{split}
E^\gamma_{z^\prime} [L^z_n] & = \dsl_{0 \le m < n} Q^\gamma_{z^\prime} [Z_m = z] \le 1 + c \dsl_{1 \le m < n} m^{-1/2} \le c\,n^{1/2}, \;\mbox{if} \;n \ge 1\,,
\\[1ex]
& = 0, \;\mbox{if} \;n = 0\,.
\end{split}
\end{equation*}
The claim (\ref{2.40}) follows.

\medskip
Let us now prove (\ref{2.41}). Using the strong Markov property and (\ref{2.20}) we find choosing $c_3 = 2\,c_0^{-1}$ that:
\begin{equation}\label{2.43}
Q_0^\gamma[D_{k_*} \ge c_3 \,T] \le \exp\Big\{- c_0\,c_3\;\dis\frac{T}{t_N} + (\log 2) \,k_*\Big\} \stackrel{\rm (\ref{2.12})}{\le} c\, \exp\{ - c\,\sigma\}\,.
\end{equation}
We thus see that
\begin{equation}\label{2.44}
\begin{array}{l}
\dsl_{1 \le k < k_*} Q_0^\gamma [d(Z_{D_k}, I) \le 5 h_N] \le 
\\[3ex]
\quad k_* \,Q_0^\gamma [D_{k_*} \ge c_3\,T]  +
\dsl_{1 \le k < k_*} Q_0^\gamma [D_k < c_3 \,T, d(Z_{D_k},I) \le 5 h_N] \,.
\end{array}
\end{equation}

\medskip\n
With (\ref{2.12}) and (\ref{2.43}) we see that the first term in the right-hand side of (\ref{2.44}) is bounded by a constant. As for the second term, note that with (\ref{2.19}), calculating derivatives in $\lambda = 0$, we have:
\begin{equation}\label{2.45}
E^\gamma_{h_N} [\wt{D}] = t_N \le E_z^\gamma [D_1], \;\mbox{for} \;z \in \partial O\,.
\end{equation}

\medskip\n
As a result for $k \ge 1$, with the strong Markov property applied at time $D_k$, we find that
\begin{equation}\label{2.46}
t_N \,Q^\gamma_0 [D_k < c_3\,T,\, d(Z_{D_k},I) \le 5 h_N] \le E^\gamma_0 [D_1 \circ \theta_{D_k}, D_k < c_3 \,T, d(Z_{D_k}, I) \le 5 h_N]\,.
\end{equation}

\medskip\n
It now follows that the last term of (\ref{2.44}) is smaller than
\begin{equation*}
t_N^{-1} \,E^\gamma_0 \Big[\dsl_{1 \le k < k_*} D_1 \circ \theta_{D_k} 1\{D_k < c_3 \,T, d(Z_{D_k},I) \le 5 h_N\}\Big]\,.
\end{equation*}

\n
Observe that on the event $\{d(Z_{D_k},I) \le 5 h_N\}$, $Q^\gamma_0$-a.s. for all $n$ in $[D_k,D_{k+1}]$, $d(Z_n,I) \le 10 h_N$, and the above expression is smaller than
\begin{equation}\label{2.47}
\begin{array}{l}
t^{-1}_N \,E^\gamma_0 \Big[\dsl_{0 \le n < [c_3 \,T]} \,1\{d(Z_n, I) \le 10 h_N\} + D_1 \circ \theta_{[c_3 \,T]}\Big] \le
\\[2ex]
t^{-1}_N \Big(\dsl_{z:d(z,I) \le 10h_N} E^\gamma_0 [L^z_{[c_3 \,T]}] + \sup\limits_{z \in \IZ} \,E_z^\gamma[D_1]\Big) \stackrel{\rm (\ref{2.20}),(\ref{2.40})}{\le} c \Big(\dis\frac{h_N}{t_N} \;\sqrt{T} + 1\Big)
\\
\\[-1ex]
 \stackrel{\rm (\ref{2.10}),(\ref{2.12})}{\le} c\Big(\dis\frac{a_N}{h_N} + 1\Big) \,.
\end{array}
\end{equation}

\medskip\n
Collecting the bounds on the right-hand side of (\ref{2.44}), with (\ref{2.3}), our claim (\ref{2.41}) readily follows.
\end{proof}
 
We can now bound the expectation of $\Sigma_2$ in (\ref{2.28}) as follows. With the strong Markov property applied at time $D_k$ and (\ref{2.20}) we see that
\begin{equation}\label{2.48}
\begin{split}
E^\gamma_0 [\Sigma_2] & \le \dsl_{1 \le k < k^*} c\,t_N \;Q_0^\gamma[d(Z_{D_k}, \cG^*) \le 5 h_N]
\\[2ex]
\stackrel{\rm (\ref{2.41})}{\le} & c\,L \,t_N \;\dis\frac{a_N}{h_N} \stackrel{\rm (\ref{2.10}),(\ref{2.12})}{\le} c\,L \,t_N \,\sigma^{1/2}, \;\mbox{for large $N$}\,.
\end{split}
\end{equation}

\n
The claim (\ref{2.33}) immediately follows from Chebyshev's inequality. This concludes the proof of (\ref{2.13}).

\medskip
We now turn to the proof of (\ref{2.14}). Note that with the strong Markov property applied at time $D_{k_*}$ we have
\begin{equation}\label{2.49}
\underset{N}{\overline{\lim}} \,Q^\gamma_0 [D_{k^*} \ge D_{k_*} + c_3 \,t_N(k^* - k_*)] \le \underset{N}{\overline{\lim}} \;\sup\limits_{z \in \IZ} \;Q^\gamma_z [D_{k^* - k_*} \ge c_3 \,t_N (k^* - k_*)] = 0\,,
\end{equation}

\n
where we used a bound similar to (\ref{2.43}) in the last step. With (\ref{2.13}) we thus find that
\begin{equation}\label{2.50}
\begin{array}{l}
\underset{N}{\overline{\lim}} \;\sup\limits_{z \in \IZ} \;E^\gamma_0 [ | (L^z_T - L^z_{D_{k^*}}) / a_N | \wedge 1] \le
\\[-1ex]
\underset{N}{\overline{\lim}} \;\sup\limits_{z \in \IZ}\;E^\gamma_0 \, [ |(L^z_{D_{k_*} + [c_3 \,t_N (k^* - k_*)]} - L^z_{D_{k_*}}) /a_N | \wedge 1] \stackrel{\overset{\mbox{\scriptsize (\ref{2.8})}}{\rm strong\; Markov}}{\le}
\\[2ex]
\underset{N}{\overline{\lim}} \;\sup\limits_{z,z^\prime \in \IZ} \;E^\gamma_{z^\prime} [L^z_{[c_3 \,t_N (k^* - k_*)]} / a_N] \stackrel{\rm (\ref{2.40})}{\le} \underset{N}{\overline{\lim}} \; \;c \,\sqrt{t}_N \;\sigma^{3/8} \,a_N^{-1} \stackrel{\rm (\ref{2.10}),(\ref{2.12})}{=} 0\,,
\end{array}
\end{equation}
which proves (\ref{2.14}).

\medskip
The claim (\ref{2.15}) immediately follows from (\ref{2.41}), once one notes that $Q^\gamma_0$-a.s. on the event  $\{Z_{R_k} \in I\}$ one has $d(Z_{D_k},I) = h_N$.

\medskip
There remains to prove (\ref{2.16}). Given $I = [z_-,z_+]$ as in (\ref{2.16}), so that  $z_0 = \frac{1}{2} \;(z_+ + z_-) \in \cG$, $\wt{I} = (z_0 - h_N, z_0 + h_N)$, and $z \in I$, we define
\begin{equation}\label{2.51}
\begin{split}
M_k & = L^z_{D_k} - \dsl_{1 \le k^\prime \le k} 1\{Z_{R_{k^\prime}} \in I\} \;E^\gamma_{Z_{R_{k^\prime}}} [L^z_{T_{\wt{I}}}], \;\mbox{for} \;k \ge 1\,, 
\\
& = 0, \;\mbox{for $k = 0$} \,.
\end{split}
\end{equation}
Observe that
\begin{equation}\label{2.52}
\mbox{$(M_k)_{k \ge 0}$ is an $(\cF_{D_k})_{k \ge 0}$-martingale under $Q^\gamma_0$.}
\end{equation}

\medskip\n
Indeed $M_k$ is $\cF_{D_k}$-measurable and
\begin{equation*}
\begin{array}{l}
E_0^\gamma \big[M_{k+1} - M_k \,| \,\cF_{D_k}\big] =
\\[2ex]
E_0^\gamma \big[L^z_{D_1} \circ \theta_{D_k} - (1 \{Z_{R_1} \in I\} \;E^\gamma_{Z_{R_1}}[L^z_{T_{\wt{I}}}]) \circ \theta_{D_k} \,|\,\cF_{D_k}\big] =
\\[2ex]
E^\gamma_{Z_{D_k}} \big[L^z_{D_1} - 1\{Z_{R_1} \in I\} \;E^\gamma_{Z_{R_1}}[L^z_{T_{\wt{I}}}]\big] = 0\,,
\end{array}
\end{equation*}

\medskip\n
using the strong Markov property and the $Q^\gamma_{z^\prime}$-a.s. identity $L^z_{D_1} = 1\{Z_{R_1} \in I\}\,L^z_{T_{\wt{I}}} \circ \theta_{R_1}$, for any $z^\prime \in \IZ$, in the last step. As a result we also see  that:
\begin{equation}\label{2.53}
\begin{split}
E^\gamma_0 [M_{k_*}^2]  & = \dsl_{0 \le k < k_*} \;E^\gamma_0 [(M_{k+1} - M_k)^2] 
\\ 
& = \dsl_{0 \le k < k_*} \,E^\gamma_0 \big[\big(1 \{Z_{R_1} \in I\} (L^z_{T_{\wt{I}}} \circ \theta_{R_1} - E^\gamma_{Z_{R_1}} [L^z_{T_{\wt{I}}}]\big)^2\big) \circ \theta_{D_k}\big]\,.
\end{split}
\end{equation}

\medskip\n
Note that for $z^\prime$ in $I$ one has the identities, see above (\ref{2.51}) for the notation
\begin{equation}\label{2.54}
\begin{array}{l}
E^\gamma_{z^\prime} \,[L^z_{T_{\wt{I}}}] = Q^\gamma_{z^\prime} [H_z < T_{\wt{I}}] \;E^\gamma_z [L^z_{T_{\wt{I}}}], \;\mbox{with} \;|Q^\gamma_{z^\prime} [H_z < T_{\wt{I}}] - 1| \le c \;\dis\frac{d_N}{h_N} \; \;\mbox{and} 
\\[1ex]
E^\gamma_z \,[L^z_{T_{\wt{I}}}] = \mbox{\f $\dis\frac{2}{\gamma}$} \;[(h_N - z + z_0)^{-1} + (h_N + z - z_0)^{-1}]^{-1}\,.
\end{array}
\end{equation}

\medskip\n
In particular  we see that for $z,z^\prime$ in $I$:
\begin{equation}\label{2.55}
E^\gamma_{z^\prime} \,[L^z_{T_{\wt{I}}}] = \gamma^{-1} \,h_N \big(1 + \varphi_I (z^\prime,z)\big), \;\mbox{with} \;|\varphi_I| \le c\,\dis\frac{d_N}{h_N}\,.
\end{equation}

\medskip\n
With Khasminski's lemma, cf.~(\ref{2.46}) of \cite{DembSzni06}, and the fact that $\sup_{z^\prime \in \IZ} \,E^\gamma_{z^\prime} [L^z_{T_{\wt{I}}}] = E^\gamma_z [L^z_{T_{\wt{I}}}] \le c \,h_N$, we see that
\begin{equation}\label{2.56}
\sup\limits_{z^\prime \in \IZ} \;E^\gamma_{z^\prime} \Big[\exp \Big\{ \dis\frac{c}{h_N} \;L^z_{T_{\wt{I}}}\Big\}\Big] \le 2 \,.
\end{equation}

\medskip\n
Coming back to (\ref{2.53}) we find that in the notation of (\ref{2.16})
\begin{equation}\label{2.57}
\underset{N}{\overline{\lim}} \;\sup\limits_I \;E^\gamma_0 [M^2_{k_*}] / a^2_N \le \underset{N}{\overline{\lim}} \;c\, \Big(\dis\frac{h_N}{a_N}\Big)^2 \;\sup\limits_I \;E^\gamma_0 \Big[\dsl_{1 \le k \le k_*} 1\{Z_{R_k} \in I\Big\}\Big] \stackrel{(\ref{2.1}),(\ref{2.15})}{=} 0\,.
\end{equation}
Moreover we also find that
\begin{equation}\label{2.58}
\begin{array}{l}
\underset{N}{\overline{\lim}} \;\sup\limits_I\,\Big| \, E^\gamma_0 \;\Big[\dsl_{1 \le k \le k_*} \,1\{Z_{R_k} \in I\} \,\Big(E_{Z_{R_k}}[L^z_{T_{\wt{I}}}] - \dis\frac{h_N}{\gamma}\Big)\Big] / a_N \Big| \stackrel{(\ref{2.55})}{\le}
\\[2ex]
\underset{N}{\overline{\lim}} \;\sup\limits_I \;c\;\dis\frac{d_N}{h_N} \;\dis\frac{h_N}{a_N} \;E_0^\gamma \,\Big[\dsl_{1 \le k \le k_*} \,1\{Z_{R_k} \in I\}\Big] \stackrel{(\ref{2.1}),(\ref{2.15})}{=} \;0\,.
\end{array}
\end{equation}

\medskip\n
The combination of (\ref{2.51}), (\ref{2.57}) and (\ref{2.58}) yields our claim (\ref{2.16}).
\end{proof}

\begin{remark}\label{rem2.3} \rm In the remainder of the article we will mostly be interested in the cases where $a_N$ in (\ref{2.1}) equals $N^d$, and $\gamma$ in (\ref{2.1}) is either $(d+1)^{-1}$, corresponding to the law of the vertical component of the walk on $E$, or $1$, corresponding to simple random walk on $\IZ$, which coincides with the law of $\wh{Z}_\point$ in (\ref{1.5}). \hfill $\square$

\end{remark}

\section{The coupling construction}
\setcounter{equation}{0}

In this section we first specify grids on $\IZ$ which are adapted to the special points $x_1,\dots,x_M$ in $E$ of Theorem \ref{theo0.1} and satisfy the requirements of the last section. The choice of these grids determines a system of excursions for the vertical component of the walk on $E$ with sequences of return and departure times still denoted by $R_k$, $D_k$, $k \ge 1$, (with an abuse of notation). We choose the sequence $h_N$ large enough so that the $\IT$-component of the walk between the successive  $R_k$, $D_k$, has time to homogenize. This is the basis for a coupling construction, where we introduce auxiliary processes $\wt{X}_\point^k$,  $k \ge 2$, which are close to the excursions $X_{(R_k + \cdot) \wedge D_k}$, $k \ge 2$, and conditionally independent given the $Z_{R_k}$, $Z_{D_k}$, $k \ge 1$. Their respective conditional distributions $P_{Z_{R_k}, Z_{D_k}}$, $k \ge 2$, see (\ref{3.19}), have starting points uniformly distributed on $\IT \times \{Z_{R_k}\}$, and are quite handy for the type of calculations we will perform in the next section when proving Theorem \ref{theo0.1}, see in particular Lemma \ref{lem4.2}. The main result of this section appears in Proposition \ref{prop3.3}. The convention concerning constants explained at the end of the Introduction is again in force. In the notation of Section 2 we now choose
\begin{equation}\label{3.1}
a_N = N^d, \;\mbox{and} \;\gamma = (d+1)^{-1}\,.
\end{equation}

\n
In particular  $Q_z^{\gamma = (d+1)^{-1}}$ coincides with the law of the vertical component $Z_\point$ of $X_\point$ under any $P_x$, $x \in \IT \times \{z\}$, and under $P$ if $z = 0$. In the notation of Theorem \ref{theo0.1} and below (\ref{0.10}), we have
\begin{equation}\label{3.2}
\{v_i, 1 \le i \le M\} = \{w_1,\dots , w_L\}, \;\mbox{where} \;w_1 < \dots < w_L\,,
\end{equation}

\n
and for each $\ell \in \{1,\dots , L\}$, we write
\begin{equation}\label{3.3}
\cI_\ell = \{ 1 \le i \le M; \;v_i = w_\ell \}\,.
\end{equation}

\n
As a consequence of (\ref{0.6}), we see that
\begin{equation}\label{3.4}
\lim_N \;\max\limits_{1 \le \ell \le L} \; \max\limits_{i,i^\prime \in \cI_\ell} \;\dis\frac{|z_i - z_{i^\prime}|}{N^d} = 0 \,,
\end{equation}

\medskip\n
and we choose sequences $h_N,d_N$ tending to infinity so that
\begin{equation}\label{3.5}
\begin{array}{ll}
{\rm i)} &h_N = o(N^d), \; h_N \ge N\;(\log N)^2 
\\[2ex]
{\rm ii)} &d_N = o(h_N), \; 1 + \max\limits_{1 \le \ell \le L} \;\max\limits_{i,i^\prime \in \cI_\ell} \;|z_i - z_{i^\prime}| = o(d_N)\,.
\end{array}
\end{equation}

\n
In view of (\ref{3.4}) and the fact that $d \ge 2$, such a choice is possible. We also choose
\begin{equation}\label{3.6}
z^*_\ell(N) = \max \{z_i (N), i \in \cI_\ell\}\,.
\end{equation}

\n
From now on we assume $N$ large enough such that with the present choices,
\begin{equation}\label{3.7}
2 \max\limits_{i \in \cI_\ell} \;|z_i - z^*_\ell |\le d_N, \;\mbox{and (\ref{2.3}) is fulfilled.}
\end{equation}

\noindent
We are thus in the set-up of Section 2, with a grid $\cG$ defined by (\ref{2.4}), and we also write for $1 \le \ell \le L$, 
\begin{align}
I_\ell & = z^*_\ell + [-d_N, d_N]  \subseteq \wt{I}_\ell = z^*_\ell+ (-h_N, h_N)\,,
\label{3.8}
\\[1ex]
B_\ell & = \IT \times I_\ell \subset \wt{B}_\ell = \IT \times \wt{I}_\ell\,, \label{3.9}
\end{align}

\medskip\n
as well as, cf.~(\ref{2.5}),
\begin{equation}\label{3.10}
\cC = \IT \times C, \quad \cO = \IT \times O\,.
\end{equation}

\medskip\n
The successive times of return of the walk $X_\point$ to $\cC$ and departure from $\cO$ naturally coincide with the successive times of return to $C$ and departure from $O$ of the vertical component $Z_\point$ of the walk. With an abuse of notation we still denote them with $R_k$, $D_k$, $k \ge 1$, so that
\begin{equation}\label{3.11}
\begin{array}{l}
R_1 = H_\cC, \;D_1 = T_{\cO} \circ \theta_{R_1} + R_1, \;\mbox{and for} \; k \ge 1
\\[1ex]
R_{k+1} = R_1 \circ \theta_{D_k} + D_k, \;D_{k+1} = D_1 \circ \theta_{D_k} + D_k \,.
\end{array}
\end{equation}

\medskip\n
Given $x \in E \backslash \cO$ we define
\begin{equation}\label{3.12}
\mbox{$z_+(x) > \pi_\IZ(x) > z_-(x)$ the closest points to $\pi_\IZ(x)$ in $\cG + \{-d_N, d_N\}$}\,.
\end{equation}

\n
For $x$ as above the distribution of $X_{R_1}$ under $P_x$ is concentrated on $\IT \times \{z_+(x), z_-(x)\}$, and as we now see it is close to a convex combination of $\nu_{z_+(x)}$ and $\nu_{z_-(x)}$, in the notation of (\ref{1.2}), when $N$ is large.
\begin{lemma}\label{lem3.1}
For large $N$, for all $x \in E \backslash \cO$, $z^\prime \in \{z_+(x), z_-(x)\}$, $x^\prime \in \IT \times \{z^\prime\}$, one has
\begin{equation}\label{3.13}
| P_x [X_{R_1} = x^\prime \,|\,Z_{R_1} = z^\prime ] - N^{-d} \,| \,\le c\,N^{-4d}\,.
\end{equation}
\end{lemma}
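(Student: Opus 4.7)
The proof hinges on the product structure furnished by (1.10). Write $x=(y,z)\in E\setminus\cO$ and $x'=(y',z')$ with $z'\in\{z_+(x),z_-(x)\}$. Since $\cC=\IT\times C$ and $x\notin\cC$, the identity $X_n=\ov{X}_{\ov{\sigma}_n}$ (stated just above (1.10)) yields $X_{R_1}=\ov{X}_{\ov{H}_\cC}$, and moreover $\ov{H}_\cC=\ov{H}_C$, where on the right $\ov{H}_C$ denotes the first time $\ov{Z}$ enters $C$. Using (1.10) together with the independence of $\ov{Y}$ and $\ov{Z}$ under $P^\IT_y\otimes P^\IZ_z$, one obtains
\begin{equation*}
P_x[X_{R_1}=x',\,Z_{R_1}=z']=E^\IZ_z\bigl[1\{\ov{Z}_{\ov{H}_C}=z'\}\,q_{\ov{H}_C}(y,y')\bigr],
\end{equation*}
where $q_t(y,y')=P^\IT_y[\ov{Y}_t=y']$ is the transition kernel of $\ov{Y}$; the analogous identity for $P_x[Z_{R_1}=z']$ holds with $q$ replaced by $1$.

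Set $t_0=\lfloor N^2(\log N)^2\rfloor$. The rest is a combination of two standard estimates. First, since $d(z,C)\ge h_N-d_N\ge \tfrac12 N(\log N)^2$ for large $N$ by (3.5), a Gaussian exit bound for the continuous-time simple random walk $\ov{Z}$ (jump rate $2$) gives
\begin{equation*}
P^\IZ_z[\ov{H}_C\le t_0]\le c\exp\bigl(-c(\log N)^2\bigr).
\end{equation*}
Second, the $d$ coordinates of $\ov{Y}$ are independent continuous-time simple random walks on $\IZ/N\IZ$ with jump rate $2$ and spectral gap of order $N^{-2}$; a product Fourier computation yields, uniformly in $y,y'\in\IT$,
\begin{equation*}
|q_t(y,y')-N^{-d}|\le c\,N^{-d}\exp\bigl(-c(\log N)^2\bigr),\qquad t\ge t_0.
\end{equation*}

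Subtracting $N^{-d}P_x[Z_{R_1}=z']$ from $P_x[X_{R_1}=x',\,Z_{R_1}=z']$ and splitting the expectation on $\{\ov{H}_C\le t_0\}$ versus $\{\ov{H}_C>t_0\}$, the small-time contribution is controlled by the first estimate and the large-time one by the second, producing
\begin{equation*}
\bigl|P_x[X_{R_1}=x',\,Z_{R_1}=z']-N^{-d}P_x[Z_{R_1}=z']\bigr|\le c\exp\bigl(-c(\log N)^2\bigr).
\end{equation*}
For the denominator, gambler's ruin gives $P_x[Z_{R_1}=z']=(z-z_-(x))/(z_+(x)-z_-(x))$ or its complement; combining $|z-z_\pm(x)|\ge h_N-d_N$ with $z_+(x)-z_-(x)\le 4h_N$ from (2.6) i) shows this probability is at least $1/5$ for large $N$. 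Dividing, and using $\exp(-c(\log N)^2)=N^{-c\log N}=o(N^{-4d})$, yields (3.13). The argument is a clean reduction to two independent one-dimensional estimates (a $1$D Gaussian exit bound and the exponential mixing of the torus walk); the only real observation is that (1.10) decouples the two components so that these standard facts apply separately.
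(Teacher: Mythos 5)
Your proof is correct and follows essentially the same route as the paper: decouple the $\IT$- and $\IZ$-components via (1.10), note that $\ov{H}_\cC$ is $\sigma(\ov{Z}_\point)$-measurable, bound the probability that the vertical walk reaches $C$ before time $N^2(\log N)^2$ using $d(z,C)\ge h_N-d_N\ge\frac12 N(\log N)^2$, invoke the spectral-gap mixing of $\ov{Y}_\point$ on the torus for larger times, and lower-bound $P_x[Z_{R_1}=z']$ by a constant via gambler's ruin. The only cosmetic difference is that you work with the joint probability and divide at the end rather than with the conditional expectation directly, and your exit-time bound $\exp(-c(\log N)^2)$ is slightly sharper than the $cN^{-4d}$ the paper records; both suffice.
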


\begin{proof}
With (\ref{1.10}) and the fact that the discrete skeleton of $\ov{X}_\point$ is distributed as $X_\point$, we see that $X_{R_1} = (Y_{R_1}, Z_{R_1})$ under $P_x$ has same distribution as $(\ov{Y}_{\hspace{-0.5ex} \ov{R}_1}$, $\ov{Z}_{\ov{R}_1})$ under $P^{\IT}_y \times P^{\IZ}_z$, if $x = (y,z)$ and $\ov{R}_1 = \inf\{t \ge 0, \ov{Z}_t \in C\}$. Writing $x^\prime = (y^\prime,z^\prime)$, the conditional probability in (\ref{3.13}) equals:
\begin{equation}\label{3.14}
P^{\IT}_y \times P^{\IZ}_z [\ov{Y}_{\hspace{-0.5ex} \ov{R}_1} = y^\prime \,| \,\ov{Z}_{\ov{R}_1} = z^\prime] = E^{\IZ}_z \big[ \mu^y_{\ov{R}_1} \;(y^\prime) \, |\,  \ov{Z}_{\ov{R}_1} = z^\prime \big]\,,
\end{equation}

\n
where for $t \ge 0$, we have set $\mu^y_t (\cdot) = P_y^{\IT} [\ov{Y}_t = \cdot ]$.

\medskip
Using standard estimates on  the displacement of simple random walk in continuous time on $\IZ$, see for instance (\ref{2.22}) of \cite{Szni08a}, and the fact that $|z - z^\pm (x)| \ge \frac{1}{2}\;N(\log N)^2$, cf.~(\ref{3.5}) i) and (\ref{2.3}), we have:
\begin{equation}\label{3.15}
P_z^{\IZ} [\ov{R}_1 \le N^2 (\log N)^2] \le c\, N^{-4d} \,,
\end{equation}
and thanks to (\ref{2.6}) i),
\begin{equation}\label{3.16}
P_z^{\IZ} [\ov{Z}_{\ov{R}_1} = z^\prime] \ge c\,.
\end{equation}

\n
Hence the expression in (\ref{3.13}) is smaller than
\begin{equation}\label{3.17}
E^\IZ_z \big[\big|  \mu^y_{\ov{R}_1} \;(y^\prime) - N^{-d} \, \big|1\{\ov{R}_1 > N^2 (\log N)^2\} \big| \,\ov{Z}_{\ov{R}_1} = z^\prime\big] + c\,N^{-4d}\,.
\end{equation}

\n
It follows from Lemma \ref{lem1.1} of \cite{Szni08a}, that for $t \ge t_\IT = \lambda_\IT^{-1} \log (2 |\IT|)$, where $\lambda_\IT$ stands for the spectral gap of the walk $\ov{Y}_\point$ on $\IT$, see (\ref{1.8}) of \cite{Szni08a}, one has
\begin{equation}\label{3.18}
|\mu^y_t(y^\prime) \,N^d -1 | \le \fr \;\exp\{ - (t - t_\IT) \,\lambda_\IT\},  \;\mbox{for} \;t \ge t_\IT \,.
\end{equation}

\n
The $d$ components of $\ov{Y}_\point$ are independent continuous time random walks on $\IZ/N\IZ$ with jump rate equal to $2$, and $\lambda_\IT$ coincides with the spectral gap of simple random walk on $\IZ/N \IZ$ with jump rate $2$ which is bigger than $c\,N^{-2}$ for $N \ge 2$. Hence we have $t_\IT \le c\,N^2 \log (2 N^d)$, and collecting (\ref{3.17}), (\ref{3.18}), we obtain (\ref{3.13}).
\end{proof}

\begin{remark}\label{rem3.2} \rm
Note that the exponent $-4d$ in the right-hand side of (\ref{3.13}) can be replaced by an arbitrarily large negative exponent by adjusting constants. This specific choice will simply be sufficient for our purpose in what follows. \hfill $\square$
\end{remark}

We now come to the main coupling construction of this section. Given $z \in C$ and $z^\prime$ with $P_{\nu_z} [Z_{D_1} = z^\prime] > 0$, (in other words $z^\prime \in \partial \wt{I}$ if $\wt{I}$ is the connected component of $O$ containing $z$),  we introduce the notation:
\begin{equation}\label{3.19}
P_{z,z^\prime} = P_{\nu_z} [ \cdot \,|Z_{D_1} = z^\prime]\,.
\end{equation}

\n
The law of $Z_\point$ under $P_x$ is the same for all $x \in \IT \times \{z\}$, and we also have:
\begin{equation}\label{3.20}
P_{z,z^\prime} = \dis\int d \nu_z(x) \,P_x [ \cdot \,|Z_{D_1} = z^\prime]\,.
\end{equation}

\n
With hopefully obvious notation the principal result of this section is
\begin{proposition}\label{prop3.3}
For large $N$ one can construct on an auxiliary probability space $(\wt{\Omega}, \wt{\cA}, \wt{P})$ a $\IZ$-valued process $Z_\point$ and $\IT$-valued processes $Y_\point$ and $\wt{Y}_\point^k$, $k \ge 2$, such that
\begin{align}
&\mbox{$X_\point = (Y_\point, Z_\point)$ has the same distribution under $\wt{P}$ as the walk on $E$ under $P$,} \label{3.21}
\\[1ex]
&\mbox{under $\wt{P}$ conditionally on $Z_{\cdot \wedge D_1}$, $Z_{R_k}$, $Z_{D_k}$, $k \ge 2$, the processes}\label{3.22}
\\[-0.5ex]
&\mbox{$\wt{X}_\point^k = (\wt{Y}^k_\point, Z_{(R_k + \cdot) \wedge D_k}), k \ge 2$, are independent with the same law as}\nonumber
\\[-0.5ex]
&\mbox{$X_{\cdot \wedge D_1}$ under $P_{Z_{R_k}, Z_{D_k}}$, $k \ge 2$.} \nonumber
\\[2ex]
&\wt{P} [Y_{(R_k + \cdot) \wedge D_k} \not= \wt{Y}_\point^k] \le c \,N^{-3d}, \;\mbox{for $k \ge 2$}\,. \label{3.23}
\end{align}
\end{proposition}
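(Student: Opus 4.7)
The plan is to realize $X_\cdot$ with law $P$ on an enlarged probability space $(\wt{\Omega},\wt{\cA},\wt{P})$ carrying, in addition, an independent family $(U_k)_{k\ge 2}$ of auxiliary uniform random variables. For each $k\ge 2$, using $U_k$ I will perform a maximal coupling that replaces the $\IT$-coordinate $Y_{R_k}$ at the $k$-th return time by a $\nu$-distributed point $\wt{Y}^k_0$, while keeping the original $Z$-excursion $Z_{(R_k+\cdot)\wedge D_k}$, and then set
\begin{equation*}
\wt{Y}^k_n = \wt{Y}^k_0 + \bigl(Y_{(R_k+n)\wedge D_k} - Y_{R_k}\bigr),\qquad \wt{X}^k_\cdot = (\wt{Y}^k_\cdot,\, Z_{(R_k+\cdot)\wedge D_k}).
\end{equation*}
Because $\wt{Y}^k_\cdot = Y_{(R_k+\cdot)\wedge D_k}$ on the matching event $\{\wt{Y}^k_0 = Y_{R_k}\}$, controlling the complementary event will yield (\ref{3.23}), while (\ref{3.21}) is immediate from the independence of $(U_k)_{k\ge 2}$ and $X_\cdot$.

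The key analytic input is an identification of the conditional law of $Y_{R_k}$ given $\cH_k := \cF_{D_{k-1}} \vee \sigma(Z_{R_k},Z_{D_k})$. Starting from the strong Markov property at $D_{k-1}$ and exploiting the fact that $Z_\cdot$ is an autonomous Markov chain under any $P_x$ — so that the conditional probability of $\{Z_{D_1}=z''\}$ given $\{X_{R_1}=(y',z')\}$ depends only on $z'$ — a Bayes computation shows that
\begin{equation*}
P\bigl[Y_{R_k} = y' \mid \cH_k\bigr] \;=\; P_{X_{D_{k-1}}}\!\bigl[Y_{R_1} = y' \mid Z_{R_1} = Z_{R_k}\bigr],
\end{equation*}
so the conditioning on $Z_{D_k}$ is in fact redundant. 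Invoking Lemma \ref{lem3.1} and summing over the $N^d$ points $y'\in\IT$ then yields that this conditional law is within total variation $c N^{-3d}$ of $\nu$. The standard maximal coupling with $U_k$ (which is independent of $\cH_k$ and of $U_2,\dots,U_{k-1}$) produces $\wt{Y}^k_0 \sim \nu$ conditionally on $\cH_k \vee \sigma(U_2,\dots,U_{k-1})$ with $\{\wt{Y}^k_0 \ne Y_{R_k}\}$ of conditional probability $\le c N^{-3d}$, which gives (\ref{3.23}).

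For (\ref{3.22}), observe that conditionally on $\cH_k \vee \sigma(U_2,\dots,U_{k-1})$: (i) $\wt{Y}^k_0 \sim \nu$ and is independent of the future of $X$ after $R_k$; (ii) by the Markov property of $Z_\cdot$ at $R_k$, the $Z$-excursion $Z_{(R_k+\cdot)\wedge D_k}$ is distributed as a $Z$-random walk from $Z_{R_k}$ conditioned to exit $\wt{I}$ at $Z_{D_k}$; and (iii) given this $Z$-excursion in addition, the one-step $Y$-increments at non-$Z$-step times are i.i.d.\ uniform on $\{\pm e_1,\dots,\pm e_d\}$ and independent of the rest of the conditioning. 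Matching this description against (\ref{3.19})--(\ref{3.20}) identifies the conditional law of $\wt{X}^k$ given $\cH_k \vee \sigma(U_2,\dots,U_{k-1})$ with $P_{Z_{R_k},Z_{D_k}}\circ X_{\cdot\wedge D_1}^{-1}$, a function of $(Z_{R_k},Z_{D_k})$ alone. Since each $\wt{X}^j$ with $2\le j<k$ is $\cH_k \vee \sigma(U_2,\dots,U_{k-1})$-measurable, iterating this identification over $k$ via the tower property and then projecting onto the coarser $\sigma$-algebra $\cG := \sigma\bigl(Z_{\cdot\wedge D_1},(Z_{R_j},Z_{D_j})_{j\ge 2}\bigr)$ delivers the conditional independence and product-marginal structure required in (\ref{3.22}).

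The main obstacle is the Bayes-Markov reduction eliminating the conditioning on $Z_{D_k}$: without it, Lemma \ref{lem3.1}, which is stated only for the conditional law of $X_{R_1}$ given $Z_{R_1}$, would not directly apply to the conditional distribution actually needed for the maximal coupling at each return time, and one would have to separately estimate the effect of the future exit level on the starting $\IT$-coordinate.
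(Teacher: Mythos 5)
Your proposal is correct and rests on the same two pillars as the paper's argument --- the total variation estimate of Lemma \ref{lem3.1} at the return times and the autonomy of the $\IZ$-component --- but the implementation differs in three genuine ways. First, you enlarge a space already carrying $X_\point\sim P$ with independent uniforms and couple ``on top'', which makes (\ref{3.21}) immediate, whereas the paper builds $\wt{P}$ from scratch as a sequence of kernels on an explicit product space, cf.~(\ref{3.29})--(\ref{3.33}), and must check that the first marginal reproduces $P$. Second, you couple the $\IT$-coordinate conditionally on $\cF_{D_{k-1}}\vee\sigma(Z_{R_k},Z_{D_k})$, which forces your Bayes--Markov reduction showing that the conditioning on the exit level $Z_{D_k}$ is redundant; the paper instead couples the full entrance point $X_{R_k}$ given $X_{D_{k-1}}$, before $Z_{D_k}$ is generated, via the measure $\rho_x$ of (\ref{3.25})--(\ref{3.28}), and absorbs the conditioning on $Z_{D_k}$ only later, inside the kernel computation (\ref{3.36})--(\ref{3.37}). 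Your reduction is sound: given $X_{R_1}=(y',z')$ the law of $Z_{D_1}$ depends only on $z'$, so the Bayes quotient leaves the conditional law of $Y_{R_1}$ given $Z_{R_1}$ unchanged. Third, on the mismatch event you translate the original $Y$-excursion rather than resampling it from the conditional law given the $Z$-path as in (\ref{3.33}); both are legitimate because, given the $Z$-excursion, the $Y$-increments are i.i.d.\ uniform on $\{\pm e_1,\dots,\pm e_d\}$ and independent of the starting point.

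The one place where your sketch is thinner than it should be is the iteration yielding (\ref{3.22}). Knowing the conditional law of $\wt{X}^k_\point$ given $\cH_k\vee\sigma(U_2,\dots,U_{k-1})$ is not by itself enough: when you peel off the factor of index $k$ in a test functional $H(Z_{\cdot\wedge D_1})\prod_j g_j(\wt{X}^j_\point)\,h_j(Z_{R_j},Z_{D_j})$, the remaining factors indexed by $j>k$ are not $\cH_k$-measurable, so you also need that, conditionally on $\cH_k\vee\sigma(U_2,\dots,U_{k-1})$, the excursion $\wt{X}^k_\point$ is independent of the future data $(Z_{R_j},Z_{D_j})_{j>k}$, the latter depending on the past only through $Z_{D_k}$. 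This follows from the Markov property of $Z_\point$ at $D_k$ and the independence of the $U$'s, and is exactly what the paper's function $\Psi_k(Z_{D_k})$ in (\ref{3.36})--(\ref{3.37}) encodes. Relatedly, $\cG$ is not coarser than $\cH_k\vee\sigma(U_2,\dots,U_{k-1})$ --- it contains the future $(Z_{R_j},Z_{D_j})$ --- so ``projecting onto the coarser $\sigma$-algebra'' should be replaced by the observation that the product identity for the generating test functionals (a $\pi$-system for $\cG$) identifies the conditional law given $\cG$. With these points made explicit your argument is complete.
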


\begin{proof}
Throughout the proof we assume $N$ sufficiently large so that (\ref{3.7}) and (\ref{3.13}) hold. Given $x = (y,z)$ in $\partial \cO$, we write
\begin{equation}\label{3.24}
\kappa_z(dz^\prime) = P_x [Z_{R_1} \in dz^\prime] = Q_z^{\gamma = (d+1)^{-1}}[Z_{R_1} \in dz^\prime]\,.
\end{equation}

\medskip\n
In the notation of (\ref{3.12}), the above distribution is concentrated on $\{z_+(x), z_-(x)\}$. With Lemma \ref{lem3.1}, the total variation distance between the conditional distribution of $X_{R_1}$ under $P_x$, given that $Z_{R_1} = z_\pm(x)$ and $\nu_{z_\pm(x)}$ is smaller than $c\,N^{d-4d} = c\,N^{-3d}$. With Theorem 5.2, p.~19 of Lindvall \cite{Lind92}, we can construct for any $x \in \partial \cO$ a probability
\begin{equation}\label{3.25}
\rho_x(dx^\prime,d\wt{x}) \;\mbox{on}\;\big\{(x^\prime, \wt{x}) \in E^2; \;\pi_\IZ(x^\prime) = \pi_{\IZ}(\wt{x}) \in \{z_+ (x), z_-(x)\}\big\}\,,
\end{equation}
such that under $\rho_x$
\begin{align}
&\mbox{the first component has same distribution as $X_{R_1}$ under $P_x$,}\label{3.26}
\\[1ex]
&\mbox{the second component has the law $\nu \otimes \kappa_z$, (see (\ref{1.2}) for the notation),}\label{3.27}
\end{align}
and moreover
\begin{equation}\label{3.28}
\rho_x (\{x^\prime \not= \wt{x}\}) \le c\,N^{-3d}\,.
\end{equation}

\medskip\n
The auxiliary space we consider is $\wt{\Omega} = \cT_\IT \times \cT_\IZ \times (\cT^f_\IT)^{[2,\infty)}$, where $\cT_\IT, \cT_\IZ$ are the canonical spaces of $\IZ$- and $\IT$-valued trajectories with jumps of $|\cdot |$-size at most 1, and $\cT^f_\IT$ is the (countable) subset of $\cT_\IT$ of trajectories, which are constant after a finite time. We define $\cT^f_\IZ$ analogously. We endow $\wt{\Omega}$ with the canonical product $\sigma$-algebra $\wt{\cA}$, and write $Y_\point, Z_\point$ and $\wt{Y}_\point^k$, $k \ge 2$, for the canonical processes as well as $X_\point = (Y_\point, Z_\point$). The probability $\wt{P}$ on $(\wt{\Omega}, \wt{\cA})$ is constructed as follows.
\begin{align}
&\mbox{The law of $X_{\cdot \wedge D_1}$ under $\wt{P}$ coincides with $P[X_{\cdot \wedge D_1} \in \cdot].$}\label{3.29}
\\[3ex]
&\mbox{The conditional law $\wt{P}[X_{(D_1 + \cdot ) \wedge R_2} \in dw, (\wt{Y}^2_0,Z_{R_2}) \in d\wt{x} \, | X_{\cdot \wedge D_1}]$}\label{3.30}
\\
&\mbox{is equal to $P_{X_{D_1}}[(X_{\cdot \wedge R_1}) \in dw | X_{R_1} = x^\prime] \;\rho_{X_{D_1}}(dx^\prime, d\wt{x}).$}\nonumber
\end{align}

\n
The above two steps specify the law of $X_{\cdot \wedge R_2}$, $\wt{Y}_0^2$ under $\wt{P}$. We then proceed as follows.
\begin{align}
&\mbox{Conditionally on $(X_{\cdot \wedge R_2})$, $ \wt{Y}^2_0$, the law of $X_{(R_2 + \cdot) \wedge D_2}$,} \label{3.31}
\\
&\mbox{under $\wt{P}$ is $P_{X_{R_2}}[(X_{\cdot \wedge T_\cO}) \in dw]$,}\nonumber
\\[3ex]
&\mbox{If $\wt{Y}^2_0 = Y_{R_2} \big(= \pi_\IT (X_{R_2})\big)$, then $\wt{Y}_\point^2 = Y_{(R_2 + \cdot) \wedge D_2}$, $\wt{P}$-a.s.}\,.\label{3.32}
\\[3ex]
&\mbox{If $\wt{Y}^2_0 \not= Y_{R_2}$, then conditionally on $X_{\cdot \wedge D_2}$, $\wt{Y}^2_0$, the law}\label{3.33}
\\
&\mbox{of $\wt{Y}^2_\point$ under $\wt{P}$ is $P_{(\wt{Y}^2_0,Z_{R_2})} [Y_{\cdot \wedge T_\cO} \in dw^\prime | Z_{\cdot \wedge T_\cO} = w(\cdot)]$,} \nonumber
\\
&\mbox{where $w(\cdot) = Z_{(R_2 + \cdot) \wedge D_2} = \pi_\IZ (X_{(R_2 + \cdot) \wedge D_2})$}\,.\nonumber
\end{align}

\medskip\n
The above steps specify the law of $(X_{\cdot \wedge D_2}, \wt{Y}_\point^2)$ under $\wt{P}$. We then proceed using the kernel in the last line of (\ref{3.30}) with $X_{D_2}$ in place of $X_{D_1}$ to specify the conditional law under $\wt{P}$ of $(X_{\cdot \wedge R_3})$, $\wt{Y}^3_0$, given $X_{\cdot \wedge D_2}$, $\wt{Y}_\point^2$, and so on and so forth to obtain the full law $\wt{P}$.

\medskip
With the above construction the claim (\ref{3.21}) follows in a straightforward fashion using (\ref{3.26}). The claim (\ref{3.23}) follows from (\ref{3.28}) and the statements (\ref{3.30}), (\ref{3.32}) and their iterations for arbitrary $k \ge 2$. To prove (\ref{3.22}) it suffices to show by induction that for bounded functions $H$ on $\cT^f_\IZ$, $g_2,\dots,g_k$ on $\cT^f_\IT \times \cT^f_\IZ$ and $h_2,\dots,h_k$ on $\IZ^2$, writing $\wt{E}$ for the $\wt{P}$-expectation, we have the identity
\begin{equation}\label{3.34}
\begin{array}{l}
\wt{E}\big[H(Z_{\cdot \wedge D_1}) \,g_2(\wt{X}^2_\point) \,h_2(Z_{R_2},Z_{D_2}) \dots g_k(\wt{X}^k_\point) \,h_k (Z_{R_k},Z_{D_k})] =
\\[1ex]
\wt{E}\big[H(Z_{\cdot \wedge D_1})\,E_{Z_{R_2},Z_{D_2}}[g_2(X_{\cdot \wedge D_1})] \,h_2(Z_{R_2},Z_{D_2}) \dots 
\\[1ex]
E_{Z_{R_k},Z_{D_k}} [g_k (X_{\cdot \wedge D_1})] \,h_k (Z_{R_k},Z_{D_k})\big]\,.
\end{array}
\end{equation}

\medskip\n
The above equality is obvious when $k = 1$. Assume it holds up to $k \ge 1$, and note that
\begin{equation}\label{3.35}
\begin{array}{l}
\wt{E}\big[H(Z_{\cdot \wedge D_1}) \,g_2(\wt{X}^2_\point) \,h_2(Z_{R_2},Z_{D_2}) \dots g_{k+1}(\wt{X}^{k+1}_\point) \,h_{k+1}(Z_{R_{k+1}},Z_{D_{k+1}})] =
\\[1ex]
\wt{E}\big[H(Z_{\cdot \wedge D_1}) \,g_2(\wt{x}^2_\point) \,h_2(Z_{R_2},Z_{D_2}) \dots \dis\int  \rho_{X_{D_k}} (dx^\prime, d \wt{x}) 
\\[1ex]
E_{\wt{x}} [g_{k+1}(X_{\cdot \wedge D_1})\,h_{k+1} (Z_0,Z_{D_1})]\big]\,,
\end{array}
\end{equation}

\medskip\n
where we used that in the first line the terms to the left of $g_{k+1}$ depend on $X_{\cdot \wedge D_k}$, $\wt{Y}^2_\point, \dots, \wt{Y}^k_\point$, together with the statements (\ref{3.30}) - (\ref{3.33}). Observe that
\begin{equation}\label{3.36}
\begin{array}{l}
\dis\int \rho_{X_{D_k}}(dx^\prime, d \wt{x})  \,E_{\wt{x}}[g_{k+1} (X_{\cdot \wedge D_1})\,h_{k+1} (Z_0,Z_{D_1})] \stackrel{(\ref{3.27})}{=}
\\[1ex]
\dis\int \kappa_{Z_{D_k}}(dz^\prime) \,E_{\nu_{z^\prime}}[g_{k+1} (X_{\cdot \wedge D_1})\,h_{k+1} (Z_0,Z_{D_1})] = \Psi_k(Z_{D_k})\,,
\end{array}
\end{equation}

\medskip\n
where for $z \in \partial O$, in the notation of (\ref{3.19}) and of Section 2, we have set:
\begin{equation}\label{3.37}
\Psi_k(z) = \dis\int \kappa_z (dz^\prime) \,E_{z^\prime}^{\gamma = (d+1)^{-1}}\big[h_{k+1}(z^\prime,Z_{D_1})\,E_{z^\prime,Z_{D_1}}[g_{k+1}(X_{\cdot \wedge D_1})]\big]\,.
\end{equation}

\n
Inserting the identity (\ref{3.36}) in the second  line of (\ref{3.35}) we can use the induction hypothesis to find that the first line of (\ref{3.35}) equals
\begin{equation*}
\begin{array}{l}
\wt{E}[H(Z_{\cdot \wedge D_1}) \,E_{Z_{R_2},Z_{D_2}} [g_2(X_{\cdot \wedge D_1})]\,h_2(Z_{R_2},Z_{D_2}) \dots h_k (Z_{R_k}, Z_{D_k}) \,\Psi_k(Z_{D_k})] =
\\[1ex]
\wt{E}[H(Z_{\cdot \wedge D_1}) \,E_{Z_{R_2},Z_{D_2}} [g_2(X_{\cdot \wedge D_1})]\,h_2(Z_{R_2},Z_{D_2}) \dots h_k (Z_{R_k}, Z_{D_k}) \,E_{Z_{R_{k+1}},Z_{D_{k+1}}} [g_{k+1}(X_{\cdot \wedge D_1})]
\\[1ex]
\quad \; h_{k+1} (Z_{R_{k+1}},Z_{D_{k+1}})]\,,
\end{array}
\end{equation*}

\n
where we used the fact that, see (\ref{3.21}), the distribution of $Z_\point$ under $\wt{P}$ coincides with $Q_0^{\gamma = (d+1)^{-1}}$, and the strong Markov property at times $R_{k+1}$ and $D_k$. This concludes the proof by induction of (\ref{3.34}), and yields (\ref{3.22}).
\end{proof}

\begin{remark}\label{rem3.4} \rm
With a slight variation on the proof of (\ref{3.22}) one can show that under $\wt{P}$, conditionally on $Z_{\cdot \wedge D_1}, Z_{(R_k + \cdot)\wedge D_k}$, $k \ge 2$, the $\wt{Y}^k_\point$, $k \ge 2$, are independent with respective distribution
\begin{equation*}
P_{\nu_{Z_{R_k}}} [Y_{\point \wedge D_1} \in dw^\prime \,|\, Z_{\cdot \wedge D_1} = w_k(\cdot)], \;\mbox{where}\;w_k(\cdot) = Z_{(R_k + \cdot ) \wedge D_k} \,.
\end{equation*}

\medskip\n
One can then quickly recover (\ref{3.22}), but we will not need this result in what follows. 

~ \hfill $\square$
\end{remark}

\section{Denouement}
\setcounter{equation}{0}

We will now provide the proof of Theorem \ref{theo0.1} in this section. This amounts to showing the convergence of expectations of the kind which appear in (\ref{4.1}) below. With the results of Section 2 and Lemma \ref{lem4.1}, we are able to replace in (\ref{4.1}) the times $T_N$ with the stopping times $D_{k_*}$, when $\rho$ in (\ref{2.11}) is chosen equal to $\alpha$ in (\ref{0.7}), and the local times $L^{z_i}_{T_N}$ with sums $(d+1) \,h_N \sum_{1 \le k \le k_*} 1\{Z_{R_k} \in I_\ell\}$, where $z_i \in I_\ell$. With the results of Section 3, we can replace the excursions $X_{(R_k + \cdot) \wedge D_k}$ with auxiliary excursions $\wt{X}_\point^k$, which are very well behaved under the conditional law where all variables $Z_{R_k},Z_{D_k}$, $k \ge 1$, are given. The main task is then to investigate how the excursions for which $Z_{R_k} \in \bigcup_{1 \le \ell \le L} I_\ell$, affect the local picture of the trace left by the walk in the neighborhood of the points $x_i, 1 \le i \le M$. This is done in the key Lemma \ref{lem4.2}, which shows that the anisotropic grids we consider and their corresponding system of excursions, are well suited for this task.

\bigskip\n{\it Proof of Theorem {\rm \ref{theo0.1}}:} In the notations of Theorem \ref{theo0.1}, our claim will follow once we show that for $K_1,\dots,K_M$ finite subsets of $\IZ^{d+1}$ and $\lambda_1,\dots,\lambda_M \ge 0$, one has
\begin{equation}\label{4.1}
\begin{array}{l}
\lim\limits_N A_N = A, \;\mbox{where}
\\[1ex]
A_N = E \Big[\mbox{\small $\prod\limits^M_{i=1}$} \,1 \{H_{x_i + K_i} > T_N\} \;\exp\Big\{ - \dsl^M_{i=1} \;\dis\frac{\lambda_i}{N^d} \;L^{z_i}_{T_N}\Big\}\Big], \;\mbox{for $N \ge 1$, and}
\\[1ex]
A = E^W\Big[\exp\Big\{- \dsl^M_{i=1} \,(d+1) \,L\Big(v_i, \mbox{\f $\dis\frac{\alpha}{d+1}$}\Big) \;({\rm cap} (K_i) + \lambda_i)\Big\}\Big]\,.
\end{array}
\end{equation}

\medskip\n
Indeed one straightforwardly uses the usual arguments relating the convergence of Laplace functionals to weak convergence, cf.~\cite{Chun74}, p.~189-191, the compactness of $\{0,1\}^{\IZ^{d+1}}$, as well as the fact that on $\{0,1\}^{\IZ^{d+1}}$, the collection of events $\{\o(x) = 1$, for all $x \in K\}$, as $K$ varies over finite subsets of $\IZ^{d+1}$, is a $\pi$-system generating the canonical product $\sigma$-algebra. Further using the boundedness and monotonicity in $T_N$ of the expression inside the expectation defining $A_N$, the continuity in $\alpha$ of the expectation defining $A$, and (\ref{0.7}), the claim (\ref{4.1}) will follow once we show it for special sequences of the form
\begin{equation}\label{4.2}
T_N = [\alpha \,N^{2d}], \;\mbox{with $\alpha > 0$}\,.
\end{equation}

\medskip\n
From now on we assume that $T_N$ is of the above form and will write $T$ in place of $T_N$ for simplicity. This agrees with the notation (\ref{2.12}), when picking $\rho = \alpha$ in (\ref{2.11}) and $a_N = N^d$, as we will implicitly do in most of what follows. Unless stated otherwise we tacitly choose $\gamma = (d+1)^{-1}$ in (\ref{2.7}). We choose the sequences $h_N, d_N$, cf.~(\ref{3.5}), as well as the inhomogeneous grids and the corresponding stopping times $R_k, D_k, k \ge 1$, as explained at the beginning of Section 3. These choices are implicit when referring to notation as in (\ref{2.10}), (\ref{2.12}) or numbered constants as in (\ref{2.20}) or (\ref{2.43}). Finally we assume $N$ large enough so that (\ref{3.7}) holds, Proposition \ref{prop3.3} applies, and cf.~(\ref{3.5}), (\ref{3.7}):
\begin{equation}\label{4.3}
\mbox{for $1 \le \ell \le L$, and $i \in \cI_\ell$, $x_i + K_i \subseteq \IT \times I_\ell$}\,.
\end{equation}

\begin{lemma}\label{lem4.1}
\begin{align}
&\lim\limits_N \;\sup\limits_{z \in \IZ, x \in E} \;P_{\nu_z} [H_x \le D_{k^* - k_*}] = 0, \;\mbox{(see {\rm  (\ref{1.2})} for the notation)} \,. \label{4.4}
\\[1ex]
&\lim\limits_N \;P\big[H_{\bigcup\limits_{1 \le i \le M} (x_i + K_i)} \le D_1 \big] = 0\,. \label{4.5}
\\[1ex]
&\lim\limits_N \;E\big[\big| \mbox{\small $\prod\limits_{1 \le i \le M}$}1 \{H_{x_i + K_i} > D_{k^*}\big\} -  \mbox{\small $\prod\limits_{1 \le i \le M} $}1 \{H_{x_i + K_i} > D_{k_*}\big\}\big|\big] = 0\,.\label{4.6}
\end{align}
\end{lemma}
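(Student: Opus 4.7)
\noindent\emph{Proof plan.} The three assertions share a single local-time estimate. Under $P_{\nu_z}$, translation invariance on $\IT$ yields, for every $m\ge 0$ and $x'=(y',z')\in E$,
\[
P_{\nu_z}[X_m=x'] \;=\; N^{-d}\,Q_z^{(d+1)^{-1}}[Z_m=z'],
\]
so that by the heat-kernel bound (\ref{2.42}),
\[
\sum_{m=0}^{n-1} P_{\nu_z}[X_m=x'] \;\le\; c\,N^{-d}\sqrt{n}
\]
uniformly in $z$, $x'$ and $n\ge 1$. Markov's inequality then gives $P_{\nu_z}[H_{x'}\le n]\le c\,N^{-d}\sqrt{n}$, and likewise for $P=P_{\nu_0}$.

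For (\ref{4.4}), I would set $n=C\,t_N(k^*-k_*)$ with $C$ a large constant. Iterating (\ref{2.20}) yields $\sup_z E_z^\gamma[\exp(c_0 D_{k^*-k_*}/t_N)]\le 2^{k^*-k_*}$, so that as in the derivation of (\ref{2.43}) the probability $P_{\nu_z}[D_{k^*-k_*}>n]$ decays exponentially in $k^*-k_*\to\infty$. On the complementary event the local-time bound gives
\[
P_{\nu_z}[H_x\le n]\;\le\; c\,N^{-d}\sqrt{C\,t_N(k^*-k_*)}\;=\;O(\sigma^{-1/8})\to 0,
\]
since $\sigma\,t_N\asymp N^{2d}$ and $k^*-k_*\asymp\sigma^{3/4}$ by (\ref{2.12}); taking the supremum over $x,z$ proves (\ref{4.4}). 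For (\ref{4.5}), I would choose $n=C\,h_N^2$ with $C$ large enough that (\ref{2.20}) gives $P[D_1>n]<\varepsilon$; the local-time bound then produces $P[H_{x_i+K_i}\le n]\le c\,|K_i|\,N^{-d}h_N$, which tends to $0$ by (\ref{3.5})(i). Summing over $i$ and letting $\varepsilon\downarrow 0$ yields (\ref{4.5}).

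For (\ref{4.6}), the symmetric difference of the two indicator products equals $\bigcup_{i=1}^M\{D_{k_*}<H_{x_i+K_i}\le D_{k^*}\}$, so by a union bound it suffices to show each summand tends to $0$. Using the identity $D_{k^*}=D_{k_*}+D_{k^*-k_*}\circ\theta_{D_{k_*}}$, the strong Markov property at $D_{k_*}$ bounds each term by $E\bigl[P_{X_{D_{k_*}}}[H_{x_i+K_i}\le D_{k^*-k_*}]\bigr]$. The crucial point is that $D_{k_*}$ is $\sigma(Z_\cdot)$-measurable, and that under $P$ the $\IT$-component moves, conditionally on $Z_\cdot$, as a random walk on $\IT$ that takes i.i.d.\ uniform $\IT$-neighbor steps exactly at the discrete times $m$ with $Z_{m+1}=Z_m$. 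Since $Y_0\sim\nu$ independently of $Z_\cdot$, one concludes that $Y_\tau\sim\nu$ is independent of $Z_\tau$ for every $\sigma(Z_\cdot)$-measurable random time $\tau$ (this is the discrete analogue of Lemma \ref{lem1.1}, and can also be reduced to Lemma \ref{lem1.1} via the continuous-time representation (\ref{1.10})). Consequently the conditional law of $X_{D_{k_*}}$ given $Z_{D_{k_*}}=z$ is $\nu_z$, and the expectation above equals $E[P_{\nu_{Z_{D_{k_*}}}}[H_{x_i+K_i}\le D_{k^*-k_*}]]\le|K_i|\,\sup_{z,x}P_{\nu_z}[H_x\le D_{k^*-k_*}]\to 0$ by (\ref{4.4}).

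The main technical obstacle is identifying the conditional law of $X_{D_{k_*}}$ given $Z_{D_{k_*}}$ as $\nu_{Z_{D_{k_*}}}$: it is this step that lets one pipe (\ref{4.4}) back into (\ref{4.6}). The remaining ingredients are routine Khasminskii-type tail estimates for sums of excursion durations together with the elementary local-time upper bound derived at the outset.
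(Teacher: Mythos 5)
Your proof is correct and follows essentially the same route as the paper: a Khasminskii-type exponential tail for $D_{k^*-k_*}$ combined with a hitting bound of order $N^{-d}\sqrt{n}$ coming from homogenization of the $\IT$-component under $P_{\nu_z}$, and then Lemma \ref{lem1.1} (applied at the $\sigma(Z_\point)$-measurable time $D_{k_*}$) to feed (\ref{4.4}) into (\ref{4.6}). The only difference is cosmetic: you obtain the bound $P_{\nu_z}[H_x\le n]\le c\,N^{-d}\sqrt{n}$ directly in discrete time from $P_{\nu_z}[X_m=x']=N^{-d}Q_z^{(d+1)^{-1}}[Z_m=z']$ and (\ref{2.40}), whereas the paper passes through the continuous-time walk and controls the jump times $\ov\sigma_n$; your version is a slight simplification of the same idea.
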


\medskip
\begin{proof}
We begin with the proof of (\ref{4.4}). As in (\ref{2.43}) we see that
\begin{equation}\label{4.7}
\begin{array}{l}
\underset{N}{\overline{\lim}} \;\sup\limits_z \;\Big\{P_{\nu_z} \Big[D_{k^* - k_*} \ge c_3\,t_N \Big[\alpha \;\dis\frac{N^{2d}}{t_N}\Big]^{3/4}\Big]\Big\} \le 
\\[2ex]
\underset{N}{\overline{\lim}} \; \exp\Big\{- c_0\,c_3 \Big[\alpha\;\dis\frac{N^{2d}}{t_N}\Big]^{3/4} + 2 (\log 2) \;\Big[\alpha \;\dis\frac{N^{2d}}{t_N}\Big]^{3/4}\Big\} = 0\,.
\end{array}
\end{equation}

\medskip\n
Writing $\gamma_N = [c_3\,t_N [\alpha \;\frac{N^{2d}}{t_N}]^{3/4}]$, so that with the notation below (\ref{4.2}), $\gamma_N = o(T)$, as $N$ tends to infinity, we see that
\begin{equation}\label{4.8}
\begin{array}{l}
P_{\nu_z} [H_x \le \gamma_N] \stackrel{(\ref{1.9})}{=} P_{\nu_z} [\ov{H}_x \le \ov{\sigma}_{\gamma_N}]  \le c\;E_{\nu_z} \Big[\dis\int_0^{\ov{\sigma}_{\gamma_N + 1}} 1\{\ov{X}_t = x\}\,dt\Big] \le
\\
\\[-1ex]
c\,E_{\nu_z} [\ov{\sigma}_{\gamma_N + 1} \ge \gamma_N +1, \,\ov{\sigma}_{\gamma_N + 1}]  + c\,E_{\nu_z} \Big[\dis\int_0^{\gamma_N + 1} 1\{\ov{X}_t = x\}\,dt\Big] \,.
\end{array}
\end{equation}

\n
The variable $\ov{\sigma}_n$ is a sum of $n$ independent exponential variables with parameter $2d + 2$. The first term in the last member of (\ref{4.8}) does not depend on $z$, and for $0 < \lambda < d+1$, is smaller than:
\begin{equation*}
c\,\lambda^{-1}\,E[e^{2 \lambda \,\ov{\sigma}_{\gamma_N + 1}}] \,e^{-\lambda(\gamma_N + 1)} \le c \,\lambda^{-1} \exp\Big\{(\gamma_N + 1) \Big[- \lambda + \log \Big(\dis\frac{1}{1 - \frac{\lambda}{d+1}}\Big)\Big] \Big\}\,.
\end{equation*}

\medskip\n
Choosing $\lambda$ close to zero, we see that the last expression tends to zero as $N$ goes to infinity. As for the last term of (\ref{4.8}), writing $x = (y,z^\prime)$, and using (\ref{1.10}) together with the fact that $\nu$ is the stationary distribution of $\ov{Y}_\point$, we see it equals:
\begin{equation}\label{4.9}
\begin{array}{l}
\dis\frac{c}{N^d} \;E^\IZ_z \Big[ \dis\int_0^{\gamma_N + 1} 1\{\ov{Z}_t = z^\prime\} \,dt\Big] \stackrel{\rm strong \; Markov}{\le} \dis\frac{c}{N^d} \;E^\IZ_{z^\prime} \Big[\dis\int_0^{\gamma_N + 1} 1\{\ov{Z}_t = z^\prime\}\,dt\Big] \le
\\[2ex]
\dis\frac{c}{N^d} \;\dis\int_0^{\gamma_N + 1} \;\dis\frac{dt}{\sqrt{t}} \le c\;\dis\frac{\sqrt{\gamma_N + 1}}{N^d} \; \underset{N \r \infty}{\longrightarrow} \;0\,,
\end{array}
\end{equation}
where we used the bound
\begin{equation}\label{4.10}
\sup\limits_{z,z^\prime \in \IZ} \;P_{z^\prime}^{\IZ} [\ov{Z}_t = z] \le \dis\frac{c}{\sqrt{t}}\,, \;\mbox{for $t > 0$}\,,
\end{equation}

\medskip\n
which is a straightforward consequence of (\ref{2.42}), when $\gamma =1$, and an exponential bound on the probability that a Poisson variable of intensity $2t$ does not belong to the interval $[t,4t]$. We have thus shown that
\begin{equation}\label{4.11}
\lim\limits_N \;\sup\limits_{z \in \IZ, x \in E} \;P_{\nu_z} [H_x \le \gamma_N] = 0\,.
\end{equation}

\medskip\n
Together with (\ref{4.7}), this concludes the proof of (\ref{4.4}). Note that (\ref{4.5}) is a direct consequence of (\ref{4.4}) with the choice $z=0$, since the probability in (\ref{4.5}) is smaller than 
\begin{equation*}
\Big(\dsl^M_1 \;|K_i| \Big) \;\sup\limits_{x \in E} \;P[H_x \le D_1]\,.
\end{equation*}

\medskip\n
Finally the expression in (\ref{4.6}) is smaller than:
\begin{equation}\label{4.12}
\begin{array}{l}
E\Big[1 \Big\{\bigcup\limits_{1 \le i \le M} \Big\{H_{x_i + K_i} \le D_{k^* - k_*}\Big\}\Big\} \circ \theta_{D_{k_*}}\Big] = 
\\[1ex]
E\Big[P_{X_{D_{k_*}}} \Big[\bigcup\limits_{1 \le i \le M} \Big\{H_{x_i + K_i} \le D_{k^* - k_*}\Big\}\Big]\Big]\,.
\end{array}
\end{equation}

\medskip\n
We can apply Lemma \ref{lem1.1}, with the choice $\tau = D_{k_*}$, and find that under $P$, $X_{D_{k_*}}$ has a distribution of the form $\nu(dy) \otimes \gamma(dz)$, where $\gamma$ is a probability on $\IZ$. It thus follows that the right-hand side of (\ref{4.12}) is smaller than
\begin{equation*}
\Big(\dsl_{1 \le i \le M} \;|K_i|\Big) \;\sup\limits_{z \in \IZ, x \in E} P_{\nu_z} [H_x \le D_{k^*- k_*}] \;\underset{N \r \infty}{\stackrel{(\ref{4.4})}{\longrightarrow}} 0\,.
\end{equation*}
This proves (\ref{4.6}).
\end{proof}

\medskip
We now return to the expression for $A_N$ in (\ref{4.1}). In view of (\ref{4.3}) and our tacit assumptions on $N$ stated above (\ref{4.3}), we find that $P$-a.s.,
\begin{align}
&X_{[D_k,R_{k+1} -1 ]} \cap \big(\,\mbox{\small $\bigcup\limits^M_1$}  \,(x_i + K_i)\big) = \emptyset, \;\mbox{for $k \ge 1$, and} \label{4.13}
\\ 
\mbox{on}\; & \big\{X_{R_k} \notin \mbox{\small $\bigcup\limits_{1 \le \ell \le L}$} B_\ell \big\}, \;X_{[R_k,D_k]} \cap \big(\,\mbox{\small $\bigcup\limits^M_1$} \,(x_i + K_i)\big) = \emptyset, \; \mbox{for $k \ge 1$,}\label{4.14}
\end{align}

\medskip\n
In other words the entrance of the walk $X$ in one of the $x_i + K_i$, $1 \le i \le M$, $P$-almost surely can only occur during one of the time intervals $[R_k,D_k]$, $k \ge 1$, with $X_{R_k}$ in $\bigcup_{1 \le \ell \le L} B_\ell$. With (\ref{2.13}) and (\ref{4.5}), (\ref{4.6}), we see that when we replace in $A_N$, cf.~(\ref{4.1}), $\mbox{\f $\prod^M_{i=1}$} 1\{H_{x_i + K_i} > T\}$ with the indicator function of the event
\begin{equation}\label{4.15}
\begin{split}
\big\{ &\mbox{for all $1 \le i \le M$, and $2 \le k \le k_*$, with} 
\\
&X_{R_k} \in \mbox{\small $\bigcup\limits_{1 \le \ell \le L}$} B_\ell, \;X_{[R_k,D_k]} \cap (x_i + K_i) = \phi \big\}\,,
\end{split}
\end{equation}

\n
the difference of the two corresponding expectations tends to zero with $N$. With (\ref{2.14}) and (\ref{2.16}), we see that in the terminology introduced at the beginning of Section 1, $A_N$ is limit equivalent to the expression where we further replace the term inside the exponential by $(d+1) \,\frac{h_N}{N^d} \;\sum^L_{\ell = 1} \;(\sum_{i \in \cI_\ell} \lambda_i) \sum_{1 \le k \le k_*} 1\{Z_{R_k} \in I_\ell\}$, that is:
\begin{equation}\label{4.16}
\begin{split}
E\Big[&\mbox{for} \;1 \le i \le M, 2 \le k \le k_*, \;\mbox{with} \;X_{R_k} \in \mbox{\small $\bigcup\limits_{1 \le \ell \le L}$} B_\ell, X_{[R_k,D_k]} \cap (x_i + K_i) = \phi\, ,
\\
&\exp\Big\{ - \dsl^L_{\ell = 1} (d+1) \;\dis\frac{h_N}{N^d} \;\Big(\dsl_{i \in \cI_\ell} \lambda_i\Big) \;\dsl_{2 \le k \le k_*} 1\{Z_{R_k} \in I_\ell\Big\}\Big\}\Big] \,.
\end{split}
\end{equation}

\medskip\n
Note that $\lim_N k_* \,N^{-3d} = 0$, so that with Proposition \ref{prop3.3}, $A_N$ is limit equivalent to:
\begin{equation}\label{4.17}
\begin{split}
\wt{E}\Big[&\mbox{for} \;1 \le i \le M, \, 2 \le k \le k_*, \;\mbox{with}\;Z_{R_k} \in\mbox{\small $\bigcup\limits_{1 \le \ell \le L}$} \;  I_\ell, \,\wt{X}^k_{[0,D_1]} \cap (x_i + K_i) = \phi \,,
\\
&\exp\Big\{ - \dsl^L_{\ell = 1} (d+1) \;\dis\frac{h_N}{N^d} \;\Big(\dsl_{i \in \cI_\ell} \lambda_i\Big)  \,\dsl_{2 \le k \le k_*} 1\{Z_{R_k} \in I_\ell\}\Big\}\Big] \stackrel{(\ref{3.22})}{=}
\\[1ex]
&\wt{E} \Big[\,\mbox{\small $\prod\limits^L_{\ell=1}$} \;\mbox{\small $\prod\limits_{2 \le k \le k_*}$} \big(1\{Z_{R_k} \notin I_\ell\} + 1\{Z_{R_k} \in I_\ell\}   \,P_{Z_{R_k},Z_{D_k}} [H_{C_\ell} > T_{\wt{B}_\ell}]\big)
\\[1ex]
&\exp\Big\{ - \dsl^L_{\ell = 1} (d+1) \,\dis\frac{h_N}{N^d} \;\Big(\dsl_{i \in \cI_\ell} \lambda_i\Big) \,\dsl_{1 \le k \le k_*} 1\{Z_{R_k} \in I_\ell\} \Big\}\Big]\,,
\end{split}
\end{equation}

\n
where we have set
\begin{equation}\label{4.18}
C_\ell = \mbox{\small $\bigcup\limits_{i \in \cI_\ell}$} \,(x_i + K_i), \;\mbox{for $1 \le i \le L$}\,.
\end{equation}

\n
As mentioned at the beginning of this section one of the key ingredients in the proof of Theorem \ref{theo0.1}, which also motivates the consideration of the kind of inhomogeneous grids we have introduced is the following
\begin{lemma}\label{lem4.2}
For large $N$, $1 \le \ell \le L$, $z_1 \in \partial_{\rm int}  I_\ell, z_2 \in \partial \wt{I}_\ell$,
\begin{equation}\label{4.19}
P_{z_1,z_2} [H_{C_\ell} < T_{\wt{B}_\ell}] = (d+1) \;\dis\frac{h_N}{N^d} \;{\rm cap}_{\wt{B}_\ell} (C_\ell) \big(1 + \Psi_\ell (z_1,z_2)\big)\,,
\end{equation}

\n
with the notation below {\rm (\ref{1.8})} and where
\begin{equation}\label{4.20}
| \Psi_\ell(z_1,z_2)| \le c\;\dis\frac{d_N}{h_N} \;.
\end{equation}
Moreover for $1 \le \ell \le L$, one has
\begin{equation}\label{4.21}
\lim\limits_N \;{\rm cap}_{\wt{B}_\ell} (C_\ell) = \dsl_{i \in \cI_\ell} \;{\rm cap}(K_i)\,.
\end{equation}
\end{lemma}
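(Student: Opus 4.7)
The plan for (4.19)-(4.20) is to first compute the unconditional probability $P_{\nu_{z_1}}[H_{C_\ell} < T_{\wt B_\ell}]$ by a last-exit decomposition, and then show that conditioning on $\{Z_{D_1}=z_2\}$ contributes only a factor $1+O(d_N/h_N)$. The sweeping identity gives
\[
P_{\nu_{z_1}}[H_{C_\ell} < T_{\wt B_\ell}] \;=\; \sum_{x \in C_\ell}\Big(\frac{1}{N^d}\sum_{y \in \IT \times \{z_1\}} g_{\wt B_\ell}(y,x)\Big)\,e_{C_\ell,\wt B_\ell}(x),
\]
and by reversibility of simple random walk on $E$ the inner bracket equals $N^{-d}$ times the expected number of visits of $X_\point$ to $\IT \times \{z_1\}$ before $T_{\wt B_\ell}$, namely $N^{-d}\,E^\gamma_{\pi_\IZ(x)}[L^{z_1}_{T_{\wt I_\ell}}]$ with $\gamma=(d+1)^{-1}$. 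Since $C_\ell \subseteq B_\ell$ by (4.3), $\pi_\IZ(x) \in I_\ell$ for every $x \in C_\ell$, and (2.55) yields $E^\gamma_{\pi_\IZ(x)}[L^{z_1}_{T_{\wt I_\ell}}] = (d+1)\,h_N\,(1+O(d_N/h_N))$, uniformly in $x \in C_\ell$ and $z_1 \in \partial_{\rm int}\,I_\ell$. Summing over $x$ and invoking (1.7) gives $P_{\nu_{z_1}}[H_{C_\ell} < T_{\wt B_\ell}] = (d+1)\,h_N\,N^{-d}\,{\rm cap}_{\wt B_\ell}(C_\ell)\,(1+O(d_N/h_N))$.

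To introduce the conditioning $Z_{D_1}=z_2$, I would apply the strong Markov property at $H_{C_\ell}$:
\[
P_{\nu_{z_1}}[H_{C_\ell} < T_{\wt B_\ell},\,Z_{D_1}=z_2] \;=\; E_{\nu_{z_1}}\big[\,H_{C_\ell} < T_{\wt B_\ell};\;P_{X_{H_{C_\ell}}}[Z_{T_{\wt I_\ell}}=z_2]\,\big].
\]
For any $x$ with $\pi_\IZ(x)\in I_\ell$, gambler's ruin for the walk $Z_\point$ on $\wt I_\ell$ gives $P_x[Z_{T_{\wt I_\ell}}=z_2] = \frac{1}{2}(1+O(d_N/h_N))$, and the same formula applied at $z_1 \in \partial_{\rm int}\,I_\ell$ gives $P_{\nu_{z_1}}[Z_{D_1}=z_2] = \frac{1}{2}(1+O(d_N/h_N))$. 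Taking the ratio yields $P_{z_1,z_2}[H_{C_\ell} < T_{\wt B_\ell}] = P_{\nu_{z_1}}[H_{C_\ell} < T_{\wt B_\ell}]\,(1+O(d_N/h_N))$, which combined with the previous paragraph proves (4.19) with the bound (4.20).

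For (4.21), the relevant inputs are condition (0.5) giving $\min_{i \ne i' \in \cI_\ell}|x_i - x_{i'}| \to \infty$, and the fact that $h_N \ge N(\log N)^2$ so that $\wt B_\ell$ eventually contains arbitrarily large $\ell^\infty$-boxes around each $x_i$. I would first decouple the different components of $C_\ell$: for $x \in x_i+K_i$,
\[
e_{x_i+K_i,\wt B_\ell}(x) - e_{C_\ell,\wt B_\ell}(x) \;\le\; P_x\big[H_{C_\ell \setminus (x_i+K_i)} < T_{\wt B_\ell}\big],
\]
and show the right-hand side tends to $0$ using transience of the walk on $E$ (valid for $d+1 \ge 3$) together with the fact that the sets $x_{i'}+K_{i'}$, $i'\ne i$, retreat to infinity. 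A standard comparison with simple random walk on $\IZ^{d+1}$, valid on scales small compared to $\min(N,h_N)$, then gives $e_{x_i+K_i,\wt B_\ell}(x) \to e_{K_i}(x-x_i)$ pointwise, so ${\rm cap}_{\wt B_\ell}(x_i+K_i) \to {\rm cap}(K_i)$; summing over $i \in \cI_\ell$ produces (4.21).

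The main obstacle is obtaining genuinely uniform $O(d_N/h_N)$-errors in (4.19): both (2.55) and the gambler's-ruin estimate must hold simultaneously for every $x \in C_\ell$ and every admissible $(z_1,z_2)$, which is made possible by (3.5)(ii), ensuring $\max_{i,i' \in \cI_\ell}|z_i - z_{i'}|=o(d_N)$ so that all relevant points stay deep inside $I_\ell$. For (4.21), the trickier step is the quantitative decoupling: one needs a Green-function bound controlling the probability of transit between distant sets inside $\wt B_\ell$, which relies on heat-kernel/Green-function estimates on the cylinder of the sort already used in earlier sections of the paper and in the cited literature.
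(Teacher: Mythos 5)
Your argument for (4.19)--(4.20) is essentially the paper's: the ratio representation of $P_{z_1,z_2}$ plus the gambler's-ruin estimate (the paper's (4.22)--(4.24)) reduces matters to $P_{\nu_{z_1}}[H_{C_\ell}<T_{\wt{B}_\ell}]$, and the sweeping identity together with symmetry of $g_{\wt{B}_\ell}$ turns the $\nu_{z_1}$-average of the Green function into the one-dimensional quantity $E^{\gamma}_{\pi_\IZ(x)}[L^{z_1}_{T_{\wt{I}_\ell}}]=(d+1)h_N(1+O(d_N/h_N))$ --- the paper reaches the same expression via the continuous-time factorization (4.26)--(4.27) and a computation ``as in (2.54)'', which is exactly the estimate you import from (2.55). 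For (4.21) you reverse the order of the paper's two reductions: you decouple the sets $x_i+K_i$, $i\in\cI_\ell$, directly inside $\wt{B}_\ell$ and then compare each $\mathrm{cap}_{\wt{B}_\ell}(x_i+K_i)$ with $\mathrm{cap}(K_i)$, whereas the paper first shrinks the killing domain to $\wt{C}_\ell=\bigcup_i B(x_i,N/(10M))$ (steps (4.32)--(4.37)), unfolds each connected component of $\wt{C}_\ell$ isomorphically into $\IZ^{d+1}$, and only then decouples via (4.42). The paper's ordering has a concrete payoff: after unfolding, the decoupling needs nothing beyond transience of simple random walk on $\IZ^{d+1}$ applied at the separation $\inf_{i\ne j}|x_i-x_j|\to\infty$ from (0.5), and the only cylinder estimate required is (4.34), which concerns separations of order $N$. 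Your ordering instead forces you to bound $P_x[H_{x_{i'}+K_{i'}}<T_{\wt{B}_\ell}]$ at an arbitrary intermediate separation $1\ll r\ll N$ (since (0.5) allows the mutual distances to diverge arbitrarily slowly), so you must splice the $\IZ^{d+1}$ bound $c\,r^{-(d-1)}$, valid up to scale comparable to $N$, with a $c\,h_N/N^d$ Green-function bound beyond that scale --- an ingredient you correctly flag, and which does go through because $h_N=o(N^d)$; it is simply an extra two-regime argument that the paper's localize-then-unfold order sidesteps.
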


\begin{proof}
We begin with the proof of (\ref{4.19}), (\ref{4.20}). With $\ell, z_1,z_2$ as above we see that
\begin{equation}\label{4.22}
\begin{array}{l}
P_{z_1,z_2} [H_{C_\ell} < T_{\wt{B}_\ell}]  \stackrel{(\ref{3.19})}{=} P_{\nu_{z_1}} [H_{C_\ell} < T_{\wt{B}_\ell}, Z_{T_{\wt{B}_\ell}} = z_2] / Q_{z_1}^{\gamma = (d+1)^{-1}} [Z_{T_{\wt{I}_\ell}} = z_2] =
\\[1ex]
E_{\nu_{z_1}}\big[H_{C_\ell} < T_{\wt{B}_\ell}, \,Q_{Z_{H_{C_\ell}}}^{\gamma = (d+1)^{-1}} [{Z_{T_{\wt{I}_\ell}}} = z_2]\big] / Q_{z_1}^{\gamma = (d+1)^{-1}} [Z_{T_{\wt{I}_\ell}} = z_2]\,.
\end{array}
\end{equation}
Note that with (\ref{3.5}), (\ref{3.7}),
\begin{equation}\label{4.23}
\sup\limits_{z \in I_\ell} |Q_z^{\gamma = (d+1)^{-1}} [Z_{T_{\wt{I}_\ell}} = z_2] - \fr \;| \le c\;\dis\frac{d_N}{h_N}\;,
\end{equation}

\medskip\n
and hence for large $N$ and any $\ell, z_1,z_2$ as above, with (\ref{4.3})
\begin{equation}\label{4.24}
P_{z_1,z_2} [H_{C_\ell} < T_{\wt{B}_\ell}] = P_{\nu_{z_1}} [H_{C_\ell} < T_{\wt{B}_\ell}] \big( 1 + \Psi^\prime_\ell(z_1,z_2)\big), \;\mbox{where}\;|\Psi^\prime_\ell(z_1,z_2)| \le c\;\dis\frac{d_N}{h_N}\;.
\end{equation}

\medskip\n
On the other hand with the notation for the walk on $E$ introduced below (\ref{1.8}), see also (\ref{1.6}), (\ref{1.8}), we can write
\begin{equation}\label{4.25}
P_{\nu_{z_1}} [H_{C_\ell} < T_{\wt{B}_\ell}] = \dsl_{x,x^\prime} \,\nu_{z_1}(x) \;g_{\wt{B}_\ell} (x,x^\prime) \,e_{C_\ell,\wt{B}_\ell}(x^\prime)\,.
\end{equation}

\n
Using the symmetry of $g_{\wt{B}_\ell}(\cdot,\cdot)$ we find that
\begin{equation}\label{4.26}
g_{\wt{B}_\ell} (x,x^\prime) = g_{\wt{B}_\ell} (x^\prime,x) = 2(d+1) \;E_{x^\prime}\Big[\dis\int_0^{\ov{T}_{\wt{B}_\ell}} 1 \{\ov{X}_t = x\}\,dt\Big]\,.
\end{equation}

\n
Hence for $x^\prime = (y^\prime, z^\prime) \in C_\ell$, we find that
\begin{equation}\label{4.27}
\begin{array}{l}
\dsl_x \nu_{z_1}(x) \,g_{\wt{B}_\ell}(x,x^\prime) \stackrel{(\ref{1.2})}{=} \dis\frac{2(d+1)}{N^d} \;\dsl_{y \in \IT} \;E_{x^\prime} \Big[\dis\int_0^{\ov{T}_{\wt{B}_\ell}} 1\{\ov{X}_t = (y,z_1)\}\,dt\Big] \stackrel{(\ref{1.10})}{=}
\\[1ex]
\dis\frac{2(d+1)}{N^d} \;E_{z^\prime}^{\IZ} \Big[\dis\int_0^{\ov{T}_{\wt{I}_\ell}} 1\{\ov{Z}_t = z_1\}\,dt\Big] = \dis\frac{(d+1)}{N^d} \;\dis\frac{Q^{\gamma = 1}_{z^\prime}[H_{z_1} < T_{\wt{I}_\ell}]}{Q^{\gamma = 1}_{z_1}[\wt{H}_{z_1} > T_{\wt{I}_\ell}]} \;,
\end{array}
\end{equation}

\medskip\n
going back to the discrete time process and counting the expected number of visits to $z_1$ prior to departure from $\wt{I}_\ell$ in the last step.

\medskip
With very similar calculations as in (\ref{2.54}), we see that the last expression for large $N$ equals $(d+1) \,\frac{h_N}{N^d} \, \big(1 + \Psi^{\prime\prime}_\ell(z^\prime,z_1)\big)$ where for $z^\prime \in I_\ell, z_1 \in \partial_{\rm int} I_\ell$, $|\Psi^{\prime\prime}_\ell (z^\prime,z_1)| \le c\,\frac{d_N}{h_N}$. Coming back to (\ref{4.25}), (\ref{4.27}), we see with (\ref{4.3}) that for large $N$, $1 \le \ell \le L$, $z_1 \in \partial_{\rm int} I_\ell$, 
\begin{equation}\label{4.28}
P_{\nu_{z_1}}[H_{C_\ell} < T_{\wt{B}_\ell}]  = (d+1) \;\dis\frac{h_N}{N^d} \;{\rm cap}_{\wt{B}_\ell} (C_\ell) \big(1 + \Psi^{\prime\prime\prime}_\ell (z_1)\big), \;\mbox{where} \;| \Psi^{\prime\prime\prime}_\ell (z_1)| \le c\, \dis\frac{d_N}{h_N}\,.
\end{equation}

\n
Together with (\ref{4.24}), the claim (\ref{4.19}), (\ref{4.20}) follows in a straightforward fashion. 

\medskip
We now turn to the proof of (\ref{4.21}). We define for $1 \le \ell \le L$
\begin{equation}\label{4.29}
\wt{C}_\ell = \mbox{\small $\bigcup\limits_{i \in \cI_\ell}$} B\Big (x_i, \mbox{\f $\dis\frac{N}{10M}$}\Big)\,, 
\end{equation}

\medskip\n 
and assume from now on that $N$ is large enough so that with (\ref{3.5}) i) and (\ref{4.3}), $C_\ell \subseteq \wt{C}_\ell \subseteq \wt{B}_\ell$. From the formulas corresponding to (\ref{1.6}), (\ref{1.7}), when $E$ replaces $\IZ^{d+1}$, we find that
\begin{equation}\label{4.30}
{\rm cap}_{\wt{B}_\ell} (C_\ell) \le {\rm cap}_{\wt{C}_\ell} (C_\ell)   \le \dsl_{i \in \cI_\ell} |K_i|, \;\mbox{for}\;1 \le \ell \le L\,. 
\end{equation}
We first show that
\begin{equation}\label{4.31}
\lim\limits_N \; {\rm cap}_{\wt{C}_\ell} (C_\ell) - {\rm cap}_{\wt{B}_\ell} (C_\ell)  = 0 \,. 
\end{equation}

\medskip\n
Indeed with (\ref{1.6}), (\ref{1.7}) we find that
\begin{equation}\label{4.32}
\begin{array}{l}
0 \le {\rm cap}_{\wt{C}_\ell} (C_\ell) - {\rm cap}_{\wt{B}_\ell} (C_\ell) = \dsl_{x^\prime \in C_\ell} \,P_{x^\prime} [T_{\wt{B}_\ell} > \wt{H}_{C_\ell} > T_{\wt{C}_\ell}] \stackrel{\rm strong \; Markov}{=}
\\[2ex]
\dsl_{x^\prime \in C_\ell} \;E_{x^\prime} \big[\wt{H}_{C_\ell} > T_{\wt{C}_\ell}, P_{X_{T_{\wt{C}_\ell}}} [H_{C_\ell} < T_{\wt{B}_\ell}]\big] \le
\\[2ex]
{\rm cap}_{\wt{C}_\ell}(C_\ell)  \;\sup\limits_{x \in \partial \wt{C}_\ell} \;P_x[H_{C_\ell} < T_{\wt{B}_\ell}] \le \Big(\dsl_{i \in \cI_\ell} |K_i|\Big) \sup\limits_{x \in \partial \wt{C}_\ell} \;P_x [H_{C_\ell} < T_{\wt{B}_\ell}]\,.
\end{array}
\end{equation}

\n
When $N$ is large we also have
\begin{equation}\label{4.33}
  \sup\limits_{x \in \partial \wt{C}_\ell} \;P_x [H_{C_\ell} < T_{\wt{B}_\ell}] \le \Big(\dsl_{i \in \cI_\ell} |K_i|\Big) \sup\limits_{x,x^\prime \,{\rm in}\,  \wt{B}_\ell \atop |x-x^\prime|_\infty \ge \frac{N}{20M}} P_x[H_{x^\prime} < T_{\wt{B}_\ell}]\,.
\end{equation}
The claim (\ref{4.31}) will now follow once we show that
\begin{equation}\label{4.34}
\lim\limits_N \;\wt{\sup} \;P_x[H_{x^\prime} < T_{\wt{B}_\ell}] = 0, \;\mbox{for $1 \le \ell \le L$}\,,
\end{equation}

\medskip\n
where $\wt{\sup}$ stand for the supremum that appears in the right-hand side of (\ref{4.33}). We consider $x = (y,z)$, $x^\prime = (y^\prime,z^\prime)$ in $\wt{B}_\ell$ with $|x - x^\prime |_\infty \ge \frac{N}{20M}$, and $\varepsilon$ in (0,1). Bringing the continuous time walk into play we find:
\begin{equation}\label{4.35}
P_x[H_{x^\prime} < T_{\wt{B}_\ell}] \le P_x[\ov{H}_{x^\prime} \le \ov{T}_{\wt{B}_\ell} \wedge (\varepsilon N^2)] + P_x [\varepsilon N^2 < \ov{H}_x < \ov{T}_{\wt{B}_\ell}]\,.
\end{equation}

\medskip\n
From translation invariance, identifying $B(0, \frac{N}{20M})$ with a  subset of $\IZ^{d+1}$, the first term in the right-hand side of (\ref{4.35}) is smaller than
\begin{equation}\label{4.36}
P_0^{\IZ^{d+1}} \Big[\sup\limits_{0 \le t \le \varepsilon N^2} |\ov{X}_t |_\infty \ge \dis\frac{N}{20M}\Big]\,.
\end{equation}

\medskip\n
The second term after the application of the Markov property at time $\varepsilon N^2$ equals
\begin{equation}\label{4.37}
\begin{array}{l}
E_x \big[\varepsilon N^2 < \ov{H}_{x^\prime} \wedge \ov{T}_{\wt{B}_\ell}, \,P_{\ov{X}_{\ve N^2}} [ \ov{H}_{x^\prime} < \ov{T}_{\wt{B}_\ell}] \big] \le
\\[1ex]
E_x \Big[\varepsilon N^2 < \ov{H}_{x^\prime} \wedge \ov{T}_{\wt{B}_\ell}, \,2(d+1) E_{\ov{X}_{\ve N^2}} \Big[\dis\int_0^{\ov{T}_{\wt{B}_\ell}} 1\{\ov{X}_t = x^\prime\}\,dt\Big]\Big] \stackrel{\rm Markov\; property}{=}
\\[1ex]
2(d+1) \,E_x \Big[\ve N^2 < \ov{H}_{x^\prime} \wedge  \ov{T}_{\wt{B}_\ell}, \dis\int^{\ov{T}_{\wt{B}_\ell}}_{\ve N^2} 1\{\ov{X}_t = x^\prime\}\,dt\Big] \stackrel{(\ref{1.10})}{\le}
\\[1ex]
2(d+1) \,E_y^\IT \;E^\IZ_z \,\Big[\dis\int_{\ve N^2}^\infty 1\{\ov{Y}_t = y^\prime\} \,1\{\ov{Z}_t = z^\prime, t < \ov{T}_{\wt{I}_\ell}\}\,dt\Big]\,.
\end{array}
\end{equation}

\medskip\n
Using classical upper bounds on the heat kernel of simple random walk on $\IZ^d$, see for instance (\ref{2.4}) of \cite{GrigTelc01}, one finds that $\sup_{y,y^\prime \in \IT} P_y^\IT [Y_n = y^\prime] \le c(\ve)\,N^{-d}$, for $n \ge \ve \,N^2$. With exponential bounds on the Poisson distribution this implies that $\sup_{y,y^\prime \in \IT} P_y^\IT [\ov{Y}_t = y^\prime] \le c(\ve)\,N^{-d}$, for $t \ge \ve N^2$. Hence the last term of (\ref{4.37}) is smaller than
\begin{equation*}
c(\ve)\,N^{-d} \,E_z^\IZ\Big[ \dis\int^{\ov{T}_{\wt{I}_\ell}}_0 1\{\ov{Z}_t = z^\prime\}\,dt\Big] \le c (\ve) \;\dis\frac{h_N}{N^d}\,,
\end{equation*}

\medskip\n
with a calculation analogous to that in the last line of (\ref{4.27}). Letting $N$ go to infinity, we see that the expression in (\ref{4.34}) is smaller than 
\begin{equation*}
\underset{N}{\overline{\lim}} \;P_0^{\IZ^{d+1}} \Big[\sup\limits_{0 \le t \le \ve N^2} |\ov{X}_t |_\infty \ge \dis\frac{N}{20M}\Big]\,,
\end{equation*}

\n
which tends to $0$ as $\ve$ goes to $0$, as a result of the invariance principle. This proves (\ref{4.34}) and (\ref{4.31}) follows as well.

\medskip
We are now reduced to the consideration of $\lim_N {\rm cap}_{\wt{C}_\ell}(C_\ell)$. We define
\begin{equation}\label{4.38}
\mbox{$V_{1,\ell},\dots,V_{r_\ell(N),\ell}$ the connected components of $\wt{C}_\ell$, for $1 \le \ell \le L$}\,.
\end{equation}

\medskip\n
The numbers $r_\ell(N)$ in view of the definition (\ref{4.29}) remain bounded by $M$. Each $V_{r,\ell}$ is the union of at most $M$ intersecting closed $|\cdot |_\infty$-balls of $|\cdot |_\infty$-diameter $\frac{2N}{10M} + 1 \le \frac{3N}{10M}$, for large $N$, and thus has diameter at most $\frac{3}{10} \,N$. So for large $N$, each $V_{r,\ell}$ can be identified with some connected set $V^\prime_{r,\ell} \subseteq \IZ^{d+1}$, such that the restriction of the canonical projection $\pi_E$ to $V^\prime_{r,\ell}$ is a bijection onto $V_{r,\ell}$. We write $x^\prime_i$ for the point in $V^\prime_{r,\ell}$ corresponding to $x_i$, when $i \in \cI_\ell$ and $x_i \in V_{r,\ell}$. Note that for $i \in \cI_\ell$ and $x_i \in V_{r,\ell}$, in view of (\ref{4.29}):
\begin{equation}\label{4.39}
B\Big(x^\prime_i, \;\dis\frac{N}{10M}\Big) \subseteq V^\prime_{r,\ell}\,.
\end{equation}

\n
With (\ref{1.6}), (\ref{1.7}) and the corresponding statements when $E$ replaces $\IZ^{d+1}$, we find:
\begin{equation}\label{4.40}
\begin{split}
{\rm cap}_{\wt{C}_\ell}(C_\ell) & = \dsl_{1 \le r \le r_\ell} {\rm cap}_{V_{r,\ell}}\Big(\bigcup\limits_{x_i \in V_{r,\ell}}(x_i + K_i)\Big)
\\[1ex]
& = \dsl_{1 \le r \le r_\ell} {\rm cap}_{V^\prime_{r,\ell}}\Big(\bigcup\limits_{x_i \in V_{r,\ell}}(x^\prime_i + K_i)\Big)\,.
\end{split}
\end{equation}

\medskip\n
Similar bounds to (\ref{4.32}), (\ref{4.33}) where $V^\prime_{r,\ell}$ plays the role of $\wt{C}_\ell$ and $\IZ^{d+1}$ the role of $\wt{B}_\ell$ readily yield that
\begin{equation}\label{4.41}
\lim\limits_N \;{\rm cap}_{\wt{C}_\ell}(C_\ell) - \dsl_{1 \le r \le r_\ell} {\rm cap} \Big(\bigcup\limits_{x_i \in V_{r,\ell}} (x^\prime_i + K_i)\Big) = 0, \;\mbox{for} \; 1 \le \ell \le L\,.
\end{equation}

\n
Moreover in view of (\ref{1.6}), (\ref{1.7}) we have:
\begin{equation}\label{4.42}
\begin{array}{l}
\underset{N}{\overline{\lim}} \;\dsl_{1 \le r \le r_\ell} \Big(\dsl_{x_i \in V_{r,\ell}} {\rm cap}(x^\prime_i + K_i) - {\rm cap} \Big(\bigcup\limits_{x_i \in V_{r,\ell}}(x^\prime_i + K_i)\Big)\Big) \le
\\[1ex]
\underset{N}{\overline{\lim}} \; |C_\ell|^2 \;\sup\{P_x^{\IZ^{d+1}} [H_{x^\prime} < \infty]; \;|x - x^\prime|_\infty \ge \inf\limits_{1 \le i \not= j \le M} d(x_i + K_i, x_j + K_j)\} = 0\,,
\end{array}
\end{equation}

\medskip\n
with the notation $d(\cdot,\cdot)$ from the beginning of Section 1, using (\ref{0.5}) and standard estimates on the walk on $\IZ^{d+1}$, $d + 1 \ge 3$. Note also that the expression inside the limsup in (\ref{4.42}) is non-negative, cf.~(\ref{1.6}), (\ref{1.7}), thus tends to zero with $N$. With translation invariance ${\rm cap}(x^\prime_i + K_i) = {\rm cap}(K_i)$, for $1 \le i \le M$, and coming back to (\ref{4.31}) and (\ref{4.41}) we obtain (\ref{4.21}).
\end{proof}

We will now conclude the proof of Theorem \ref{theo0.1}. With (\ref{4.19}) we see that for large $N$, $1 \le \ell \le L$, $z_1 \in \partial_{\rm int} I_\ell$, $z_2 \in \partial \wt{I}_\ell$,
\begin{equation}\label{4.43}
P_{z_1,z_2}[H_{C_\ell} < T_{\wt{B}_\ell}] \le c\; \dis\frac{h_N}{N^d} \;\dsl^M_{i=1} \,|K_i| \stackrel{\rm def}{=} \delta_N \underset{N \r \infty}{\longrightarrow} 0\,.
\end{equation}

\n
Using the inequality $0 \le e^{-u} - 1 + u \le u^2$, for $u \ge 0$, we see that for $1 \le \ell \le L$,
\begin{equation}\label{4.44}
\begin{array}{l}
\Big| \prod\limits_{2 \le k \le k_*} \big(1 - 1\{Z_{R_k} \in I_\ell\}\,P_{Z_{R_k},Z_{D_k}} [H_{C_\ell} < T_{\wt{B}_\ell}]\big) 
\\[1ex]
- \exp\Big\{- \dsl_{2 \le k \le k_*} 1\{Z_{R_k} \in I_\ell\} \,P_{Z_{R_k},Z_{D_k}} [H_{C_\ell} < T_{\wt{B}_\ell}]\Big\}\Big| \le
\\[1ex]
\delta_N \;\dsl_{2 \le k \le k_*} 1\{Z_{R_k} \in I_\ell\}  \dis\frac{h_N}{N^d} \;c \dsl^M_{i=1} \,|K_i| \,,
\end{array}
\end{equation}

\medskip\n
and with (\ref{2.15}) we see that the $P$-expectation of the last expression goes to zero as $N$ goes to infinity. Analogously we have for $1 \le \ell \le L$,
\begin{equation}\label{4.45}
\begin{array}{l}
E \Big[ \Big| \exp \Big\{-\dsl_{2 \le k \le k_*} 1\{Z_{R_k} \in I_\ell\} \,P_{Z_{R_k},Z_{D_k}} [H_{C_\ell} < T_{\wt{B}_\ell}]\Big\} \;-
\\[1ex]
\qquad  \exp\Big\{ - (d+1) \;\dis\frac{h_N}{N^d} \;\dsl_{i \in \cI_\ell} \,{\rm cap}(K_i) \,\dsl_{2 \le k \le k_*} 1\{Z_{R_k} \in I_\ell\}\Big\}\Big|\Big] \le 
\\[1ex]
E\Big[  \dsl_{2 \le k \le k_*} 1\{Z_{R_k} \in I_\ell\} \,\Big|P_{Z_{R_k},Z_{D_k}} [H_{C_\ell} < T_{\wt{B}_\ell}] - (d+1) \, \dis\frac{h_N}{N^d} \dsl_{i \in \cI_\ell} \,{\rm cap}(K_i) \Big|\Big]
\\[3ex]
\stackrel{(\ref{4.19}),(\ref{4.20}),(\ref{4.30})}{\le} \Big\{c \;\dis\frac{d_N}{h_N} \Big(\dsl_{i \in \cI_\ell} \, |K_i|\Big) + \big| {\rm cap}_{\wt{B}_\ell}(C_\ell) - \dsl_{i \in \cI_\ell} \,{\rm cap}(K_i)\big| \Big\} 
\\[2ex]
E\Big[  \dsl_{2 \le k \le k_*} 1\{Z_{R_k} \in I_\ell\} \,(d+1) \,  \dis\frac{h_N}{N^d}\Big]\,,
\end{array}
\end{equation}

\medskip\n
which tends to zero thanks to (\ref{2.15}), (\ref{3.5}) ii) and (\ref{4.21}). Coming back to the right-hand side of (\ref{4.17}), we see with (\ref{4.44}), (\ref{4.45}) that $A_N$ is limit equivalent to
\begin{equation}\label{4.46}
E\Big[ \exp\Big\{ - \dsl^L_{\ell = 1} \;(d+1) \;\ \dis\frac{h_N}{N^d} \;\dsl_{i \in \cI_\ell} \,({\rm cap} (K_i) + \lambda_i) \,\dsl_{1 \le k \le k_*} 1\{Z_{R_k} \in I_\ell\} \Big\}\Big] \,.
\end{equation}

\n
Consider $\wh{Z}_\point$ from (\ref{1.5}) and define the $(\cF_{\tau_k})_{k \ge 0}$-stopping times $\wh{R}_m, \wh{D}_m, m \ge 1$, which are the successive returns to $C$, cf.~(\ref{2.5}), and departures from $O$ of $\wh{Z}_\point$. Comparing with (\ref{2.8}), we then see that
\begin{equation}\label{4.47}
\mbox{for $k \ge 1$, $\wh{Z}_{\wh{R}_k} = Z_{R_k}, \,\wh{Z}_{\wh{D}_k} = Z_{D_k}$}\,.
\end{equation}

\medskip\n
Moreover by inspection of (\ref{2.10}) and (\ref{2.12}) we see that with hopefully obvious notation:
\begin{equation}\label{4.48}
\begin{array}{l}
t_N(\gamma = (d+1)^{-1}) = (d+1) \,t_N(\gamma = 1), \;\mbox{and}\;
\\[1ex]
 k_*(\gamma = (d+1)^{-1} , \rho = \alpha) = k_* \big(\gamma = 1, \rho = \mbox{\f $\dis\frac{\alpha}{d+1}$}\big)\,.
\end{array}
\end{equation}

\n
Introducing the local time of $\wh{Z}_\point$:
\begin{equation}\label{4.49}
\wh{L}^z_k = \dsl_{0 \le m < k} 1\{\wh{Z}_m = z\}, \;\mbox{for}\; k \ge 0, z \in \IZ\,,
\end{equation}

\medskip\n
it now follows from the observation below (\ref{1.5}) and (\ref{2.14}), (\ref{2.16}) applied with $\gamma = 1$ and $\rho = \frac{\alpha}{d + 1}$, cf.~(\ref{2.12}), that for $1 \le \ell \le L$,
\begin{equation}\label{4.50}
\lim\limits_N \;\sup\limits_{z \in I_\ell} \;E\Big[ \Big| \dis\frac{\wh{L}^z_{[\frac{\alpha}{d+1}\,N^{2d}]}}{N^d} -  \dis\frac{h_N}{N^d} \dsl_{1 \le k \le k_*} 1\{Z_{R_k} \in I_\ell\}  \Big| \wedge 1\Big] = 0\,,
\end{equation}

\n
where we also made use of (\ref{4.47}). With (\ref{4.46}) we thus see that $A_N$ is limit equivalent to
\begin{equation}\label{4.51}
E\Big[ \exp\Big\{ - \dsl^L_{\ell = 1} \,(d+1) \,\dsl_{i \in \cI_\ell} \,({\rm cap}(K_i) + \lambda_i) \, \dis\frac{\wh{L}^{z_i}_{[\frac{\alpha}{d+1}\,N^{2d}]}}{N^d} \Big\}\Big]\,.
\end{equation}

\n
From (1.20) of \cite{CsakReve83}, we can construct a coupling of the simple random walk on $\IZ$ with Brownian motion on $\IR$ in such a fashion that for all $\delta > 0$,
\begin{equation}\label{4.52}
\sup\limits_{z \in \IZ} \,|\wh{L}^z_k - L(z,k)| / k^{\frac{1}{4} + \delta} \underset{k \r \infty}{\longrightarrow} 0,\; \mbox{a.s.},
\end{equation}

\medskip\n
with $L(\cdot,\cdot)$, see below (\ref{0.4}), standing for a jointly continuous version of the local time of the canonical Brownian motion. As a result we see that in the notation introduced below (\ref{0.4}), $A_N$ is limit equivalent to
\begin{equation}\label{4.53}
\begin{array}{l}
E^W \Big[ \exp\Big\{ - \dsl^M_{i=1} \,({\rm cap}(K_i) + \lambda_i)\; \dis\frac{(d+1)}{N^d} \; L \, \Big(z_i, \Big[\mbox{\f $\dis\frac{\alpha}{d+1}$} \;N^{2d}\Big]\Big)\Big\}\Big] \stackrel{\rm scaling}{=}
\\[2ex]
E^W \Big[ \exp\Big\{ - \dsl^M_{i=1} \,({\rm cap}(K_i) + \lambda_i) (d+1) \,L \,\Big(\dis\frac{z_i}{N^d}, \dis\frac{1}{N^{2d}} \,\Big[\mbox{\f $\dis\frac{\alpha}{d+1}$} \;N^{2d}\Big]\Big)\Big\}\Big] 
\end{array}
\end{equation}


\medskip\n
which by dominated convergence and (\ref{0.6}) converges to 
\begin{equation*}
E^W \Big[ \exp\Big\{ - \dsl^M_{i=1} \,({\rm cap}(K_i) + \lambda_i) (d+1) L\Big(v_i, \mbox{\f $\dis\frac{\alpha}{d+1}$}\Big)\Big\}\Big] \,.
\end{equation*}
This concludes the proof of (\ref{4.1}) and of Theorem \ref{theo0.1}. \hfill $\square$

\begin{remark}\label{rem4.3} \rm The methods developed in the proof of Theorem \ref{theo0.1} have a certain robustness and can be adapted to handle variations on the set-up discussed here. But clearly the fact that the times $T_N$ and the related local times involved in the statement of Theorem \ref{theo0.1} can be well approximated  in terms of the coarse-grained information on the vertical component of the walk encaptioned in the $R_k$, $D_k$, $Z_{R_k}, Z_{D_k}, k \ge 1$, see Proposition \ref{prop2.1}, plays an important role. The results presented here leave open a number of natural questions. For instance in the light of \cite{DembSzni06}, \cite{DembSzni07}, \cite{Szni07}, what happens when one chooses $T_N$ to be the disconnection time of the cylinder (and possibly one of the $x_i$ random and equal to $X_{T_N}$)? Another natural problem in the light of \cite{Szni08a} is to investigate what happens when in place of (\ref{0.1}) the cylinders are of the form $G_N \times \IZ$, with $G_N$ a sequence of finite graphs with uniformly bounded degree and cardinality tending to infinity, and the walk is run up to time of order $|G_N|^2$. Here one should for instance consider situations where in the neighborhood of some point $G_N$ tend to look like an infinite graph $G_\infty$, and the walk on $G_\infty \times \IZ$ is transient. These are just a few examples of the questions raised by the present work. \hfill $\square$
\end{remark}


\begin{thebibliography}{1}

\bibitem{Chun74}
K.L. Chung.
\newblock {\em A course in probability theory}.
\newblock Academic Press, San Diego, 1974.

\bibitem{CsakReve83}
E.~Cs\'aki and P.~Revesz.
\newblock Strong invariance for local times.
\newblock {\em Z. f\"ur Wahrsch. verw. Geb.}, 62:263--278, 1983.

\bibitem{DembSzni06}
A.~Dembo and A.S. Sznitman.
\newblock On the disconnection of a discrete cylinder by a random walk.
\newblock {\em Probab. Theory Relat. Fields}, 136(2):321--340, 2006.

\bibitem{DembSzni07}
A.~Dembo and A.S. Sznitman.
\newblock A lower bound on the disconnection time of a discrete cylinder.
\newblock {\rm Progress in Probability, vol. 60, In and Out of Equilibrium 2, Birkh\"auser, Basel, 211-227, 2008.}

\bibitem{GrigTelc01}
A.~Grigoryan and A.~Telcs.
\newblock Sub-{G}aussian estimates of heat kernels on infinite graphs.
\newblock {\em Duke Math. J.}, 109(3):451--510, 2001.

\bibitem{Khas59}
R.Z. Kha\'sminskii.
\newblock On positive solutions of the equation ${A}u + {V}u = 0$.
\newblock {\em Theor. Probab. Appl.}, 4:309--318, 1959.

\bibitem{Lind92}
T.~Lindvall.
\newblock {\em Lectures on the coupling method}.
\newblock Dover Publications, Inc., New York, 1992.

\bibitem{Szni08a}
A.S. Sznitman.
\newblock How universal are asymptotics of disconnection times in discrete cylinders?
\newblock {\em Ann. Probab.}, 36(1):1-53, 2008.

\bibitem{Szni07}
A.S. Sznitman.
\newblock Vacant set of random interlacements and percolation. 
\newblock {\em {\rm Preprint available at:
  http://www.math.ethz.ch/u/sznitman/preprints}}.


\bibitem{Szni08b}
A.S. Sznitman.
\newblock Upper bound on the disconnection time of discrete cylinders and
  random interlacements.
\newblock {\em {\rm Preprint available at:\\
  http://www.math.ethz.ch/u/sznitman/preprints}}.

\bibitem{SzniSido08}
A.S. Sznitman and V.~Sidoravicius.
\newblock Percolation for the vacant set of random interlacements. 
\newblock {\em {\rm Preprint available at:
  http://www.math.ethz.ch/u/sznitman/preprints}}.

\bibitem{Teix08}
A.~Teixeira.
\newblock On the uniqueness of the infinite cluster of the vacant set of random interlacements.
\newblock {\rm To appear in {\it Ann. Appl. Probab.}, also available at arXiv:0805.4106}.


\bibitem{Wind08}
D.~Windisch.
\newblock Random walk on a discrete torus and random interlacements.
\newblock {\em Electronic Communications in Probability}, 13:140-150, 2008.


\bibitem{Woes00}
W.~Woess.
\newblock {\em Random walks on infinite graphs and groups}.
\newblock Cambridge University Press, Cambridge, 2000.



\end{thebibliography}
\end{document}